\numberwithin{equation}{section}
\newtheorem{theorem}{Theorem}[section]
\newtheorem{lemma}{Lemma}[section]
\newtheorem{proposition}{Proposition}[section]
\newtheorem{corollary}{Corollary}[section]
\renewenvironment{proof}[1][Proof]{\noindent\textit{#1. } }{\hfill$\square$}
\newcommand{\Ass}[1]{\textbf{\upshape A#1}}
\newcommand{\R}{\mathbb{R}}
\newcommand{\taumax}{\tau}
\newcommand{\defeq}{:=}
\newcommand{\zd}{\,\mathrm{d}}
\newcommand{\brab}[1]{\big(#1\big)}
\def\d{\delta}
\def\R{\mathbb{R}}
\title{
Unconditionally optimal error estimate of a linearized variable-time-step  BDF2 scheme for  nonlinear parabolic equations
}
\date{September 1, 2021}
\author{Chengchao Zhao
	\thanks{Beijing Computational Science Research Center, Beijing, 100193, P.R. China (cheng\_chaozhao@csrc.ac.cn).}
	\and Nan Liu
	\thanks{School of Mathematics and Statistics, Wuhan University, Wuhan 430072, China (liunan@whu.edu.cn)}
\and Yuheng Ma
 \thanks {School of Mathematics and Statistics, Wuhan University, Wuhan 430072, China (yuhengma@whu.edu.cn)}
	\and Jiwei Zhang
	\thanks{School of Mathematics and Statistics, and Hubei Key Laboratory of Computational Science, Wuhan University, Wuhan 430072, China (jiweizhang@whu.edu.cn). He is supported partially by NSFC under grant 11771035.}
	}
\begin{document}
\maketitle
\begin{abstract}
In this paper we consider a linearized variable-time-step two-step backward differentiation formula (BDF2) scheme for solving nonlinear parabolic equations. The scheme is constructed by using the variable time-step BDF2  for the linear term and a Newton linearized method for the nonlinear term in time combining with a Galerkin finite element method (FEM) in space. We prove the unconditionally optimal error estimate of the proposed scheme under mild restrictions on the ratio of adjacent time-steps, i.e. $0<r_k < r_{\max} \approx 4.8645$ and on the maximum time step.  The proof involves  the discrete orthogonal convolution (DOC) and discrete complementary convolution (DCC) kernels, and the error splitting approach.   In addition, our analysis also shows that  the first level solution $u^1$ obtained by BDF1 (i.e. backward Euler scheme) does not cause the loss of global accuracy of second order. 
Numerical examples are provided to demonstrate our theoretical results.
 \par\textbf{keywords:}  two-step backward differentiation formula; the discrete orthogonal convolution  kernels; the discrete complementary convolution  kernels; error estimates; variable time-step; Unconditional error estimate; error splitting approach
\end{abstract}
\section{Introduction}
In this paper, we focus on the unconditionally optimal error estimate of a linearized second-order two-step backward differentiation formula (BDF2) scheme with variable time steps for solving the following general nonlinear parabolic equation \cite{F37,ZF38}:
 \begin{equation}\label{001}
\begin{aligned}
&\partial_t u=\Delta u +f(u),&&\bm{x}\in\Omega, t\in(0,T],\\
&u(\bm{x},0)=u_0(\bm{x}),&& \bm{x}\in \Omega,\\
&u(\bm{x},t)=0,&&\bm{x}\in\partial\Omega, t\in[0,T],
\end{aligned}
\end{equation}
where $\Omega\subset \mathbb{R}^d(d=1,2,3)$ is a bounded convex  domain. 
To construct variable time-step schemes, we first set the variable time levels $0=t_0<t_1<\cdots<t_N=T$,  the $k$th time-step size $\tau_k\defeq t_k-t_{k-1}$, the maximum step size
$\taumax\defeq\max_{1\le k\le N}\tau_k$, and the adjacent time-step ratios
$r_k = \tau_k/\tau_{k-1}, \; 2\leq k \leq N.$ Set $u^k=u(t_k), \nabla_\tau u^k := u^k - u^{k-1}, \; r_1\equiv 0$, and denote $U^n$ by the approximation of the exact solution $u(t_n)$.
The first step value $U^1$ is calculated by one-step backward difference formula  (BDF1), and the {other} $U^n\; (n>1)$ is calculated by BDF2 formula with variable time steps, which are respectively given as
\begin{equation} \label{2104241}
\mathcal{D}_1 U^1 = \frac1{\tau_1}\nabla_\tau U^1,\quad \mathcal{D}_2 U^n = \frac{1+2r_n}{\tau_n(1+r_n)} \nabla_\tau U^n - \frac{r_n^2}{\tau_n(1+r_n)} \nabla_\tau U^{n-1}.
\end{equation}
The nonlinear term is approximated by a linearized method, i.e.,  the Newton linearized method given as
\begin{align}\label{009}
f({u^n})\approx f(U^{n-1})+f'(U^{n-1})\nabla_\tau U^n.
\end{align}
Thus the semi-discrete BDF2 scheme with variable time steps to problem \eqref{001} is given as
\begin{align} \label{018}
&\mathcal{D}_2 U^n = \Delta U^n  + f(U^{n-1})+f'(U^{n-1})\nabla_\tau U^n,&& \text{for} \quad 1\leq n\leq N,\\
&U^0(\bm{x})=u_0(\bm{x}),&& \bm{x} \in \Omega,\\
&U^n(\bm{x})=0,&& \bm{x} \in\partial\Omega,\quad n=0,1,\cdots,N.
\end{align}
Here the BDF1 and BDF2 formulas have been written as a unified convolution form of
\begin{equation} \label{002}
\mathcal{D}_2 U^n = \sum_{k=1}^n b_{n-k}^{(n)} \nabla_\tau U^k, \quad \quad n\geq 1
\end{equation}
 by taking $b_0^{(n)} =  (1+2r_n)/(\tau_n(1+r_n)), \; b_1^{(n)}=  - r_n^2/(\tau_n(1+r_n))\; \text{and} \; b_j^{(n)} = 0 \; \text{ for } 2\leq j \leq n-1.$

{For the spatial discretization,} let $\mathcal{I}_h$ be the quasiuniform partition of $\Omega$ with triangles in $\R^2$ or tetrahedra  $T_i \; (i=1,\cdots,M)$ in $\R^3$, see \cite{lwz17,T06}. Denote  the spatial mesh size by $h=\max_{1\leq i\leq M}\{\text{diam} \ T_i\}$ and the finite-dimensional subspace of $H^1_0(\Omega)$ by $V_h$ which consists of piecewise polynomials of degree $r \;(r\geq 1)$ on $\mathcal{I}_h$.
Then, the Newton linearized Galerkin FEM BDF2 scheme with variable time steps is to find $U^n_h\in V_h$ such that
 \begin{align}\label{008}
(\mathcal{D}_2 U_h^n,v_h)=-(\nabla U_h^n, \nabla v_h)+(f(U_h^{n-1})+f'(U_h^{n-1})\nabla_\tau U_h^n,v_h), \quad \forall v_h\in V_h.
\end{align}

Efficiency, accuracy and  reliability are of key considerations in numerical analysis and scientific computing.  For the time-dependent models appeared in science and engineering,  a heuristic and promising method to improve efficiency without sacrificing accuracy is the time adaptive method. For instance, one may employ the coarse-grained or refined time steps based on the solutions changing slowly or rapidly  to capture the dynamics of the solutions. Another alternative approach is to employ high-order
methods in time to have the same accuracy with a relatively large time-step. In this paper, we consider the second order BDF2 scheme with variable time steps.

Due to its nice property ($A$-stable),  the BDF2 method with variable time steps has  been widely used in various models  to  obtain computationally efficient, accurate results \cite{CWYZ19,zz21t,WCF19,LTZ20,LSTZ21,LJZ20}. Much works have been carried out on its stability and error estimates \cite{zz21,lz20,G83,E05,CWYZ19,B98}, but the  analysis even for linear parabolic problems (i.e., problem \eqref{001} with $f(u) =0$) is already highly nontrivial and challenging as documented in the classic book \cite[Chapter 10]{T06}. One also refers to the details in \cite{G91,G95,G83,L82,P97}.  In particular, with the energy method and under a ratio condition $0< r_k \leq  1.868$, Becker \cite{B98} presents that the variable time-step BDF2 scheme for a linear parabolic problems is  zero-stable. After that, Emmrich \cite{E05} gives the similar result with $0< r_k \leq 1.91$.  Recently, a promising work in \cite{CWYZ19} introduce a novel generalized discrete Gr\"onwall-type inequality for the Cahn-Hilliard equation to obtain
an energy stability with $0< r_k \leq 3.561$.  The works in \cite{lz20,zz21} consider linear parabolic equations based on DOC kernels under $0\leq r_k \leq 3.561$ \cite{lz20} and $0<r_k\leq 4.8645$ \cite{zz21}, respectively. In addition, the  robust second-order convergence is further analyzed in \cite{zz21}. The robustness here means the optimal second-order accuracy holds valid only requiring the time-step sizes  $0<r_k\leq 4.8645$.

For the variable-time-step BDF2 method applied to nonlinear parabolic equations, there has a great progress on the error estimates for the Cahn-Hilliard equation \cite{CWYZ19}, molecular beam epitaxial model without slope selection \cite{zz21t,LSTZ21},  phase field crystal model \cite{LJZ20}, and references therein. Among  all the  numerical methods in \cite{CWYZ19,zz21t,LSTZ21,LJZ20},  the implicit schemes are utlized to deal with the nonlinear terms. Although the implicit schemes are unconditionally stable,  they need to solve a nonlinear algebraic system in each time level.  To circumvent the extra computational cost for iteratively solving the nonlinear algebraic system, a popular approach is to employ the linearized methods to approximate the nonlinear terms. Combining the variable-time-step BDF2 for linear terms with a linearized method for nonlinear terms, it is natural to ask if the proposed implicit-explicit schemes still remain unconditionally stable. In addition, the focus of this paper is on the general nonlinearity $f$, which also brings the extra analysis difficulty comparing with the phase-fields models studied in \cite{CWYZ19,zz21t,LSTZ21,LJZ20},  since the good property of the energy dissipation for the phase-fields models does not hold any more for general nonlinearity.

In this paper, we aim to address the unconditionally optimal error estimate  $\mathcal{O}(\tau^2+h^{r+1})$ of the linearized scheme \eqref{008} in the sense of $L^2$-norm under the following two conditions:
  \begin{itemize}
\item [\Ass{1} :]  $0< r_k \leq r_{\max}-\delta$ for any small constant $0<\delta < r_{\max}\approx 4.8645$  and  $2\leq k \leq N$,
where $r_{\max} = \frac{1}{6} \left(\sqrt[3]{1196\!-\!12\sqrt{177}}+\sqrt[3]{1196\!+\!12\sqrt{177}}\right)+\frac43$ is the root of equation $x^3 = (1+2x)^2$;
\item [\Ass{2} :] there exists a constant $\hat{C}$ independent of $\tau$ and $N$ such that the maximum time step size $\tau$ satisfies $\tau\leq \hat{C}\frac{1}{\sqrt{N}}$.
\end{itemize}

The proof of the error estimate is established by the introduction of the temporal-spatial error splitting approach and the concepts of DOC and DCC kernels.  The error splitting approach, developed in \cite{LS12}, is used to overcome the unnecessary  restrictions of the temporal and spacial mesh sizes. The main idea of the error splitting approach is to first consider the boundedness of $\|U^n\|_{H^2}$ for the solution to semi-discrete equation \eqref{018}, and then to obtain the estimate of $\|U_h^n\|_{L^\infty}$ for the fully discrete equation \eqref{008} by using the following strategy
\begin{align}
\|U_h^n\|_{L^\infty}&\leq\|R_hU^n\|_{L^\infty}+\|R_hU^n-U_h^n\|_{L^\infty}\nonumber\\
&\leq C\|U^n\|_{H^2}+Ch^{-d/2}\|R_hU^n-U_h^n\|_{L^2}\label{EQ_1444}\\
&\leq C+Ch^{-d/2}h^2,\nonumber
\end{align}
where $R_h$ is the projection operator.

The concepts of DOC and DCC kernels, developed in \cite{lz20,zz21}, are used to present the stability and convergence analysis under a mild restriction on the ratio of adjacent time-steps, i.e., \Ass{1}.  The condition \Ass{1} is the mildest restriction in  the current literatures, as far as we know, for the variable time-step BDF2 method. On the other hand, due to the usage of a linearized scheme to approximate the general nonlinearity, our optimal error estimate in time  suffers from another restriction on the maximum time step, i.e., \Ass{2}.  Noting that the condition \Ass{2}  only involves the maximum time step, hence it is mild and acceptable/reasonable for the practical simulations to have the convergence order.

For general nonlinearity, our improvements are twofold:  (i) comparing with the fully implicit schemes developed \cite{CWYZ19,zz21t,LSTZ21,LJZ20},   a linearized scheme is considered in this paper, and its first rigorous proof of unconditionally optimal convergence is presented under mild restrictions on \Ass{1} and \Ass{2};  (ii) the linearized BDF2 scheme \eqref{008} still has the second-order accuracy in time as the first order BDF1 method is used only once to compute the first step value.

The remainder of this paper is organized as follows. In section \ref{Section_2}, we present several important   properties of the DOC kernels and DCC kernels which play a key role in our stability and convergence analysis. The unconditional $L^\infty$ boundedness of numerical solutions of the fully discrete scheme is derived in section \ref{Section_3} and then the unconditionally optimal $L^2$-norm error estimate is obtained in section \ref{Section_4}. In section \ref{Section_5}, numerical examples are provided to confirm our theoretical
analysis.

\section{Setting }\label{Section_2}
In this paper,  we assume there exists a constant $\mathcal{M}$ such that the solution of problem \eqref{001}  satisfies
\begin{align}
\|u_0\|_{H^{r+1}}&+\|u\|_{L^\infty((0,T);H^{r+1})}+\|\partial_t u\|_{L^\infty((0,T);H^{r+1})}\nonumber \\
&+{\|\partial_{tt} u\|_{L^\infty((0,T);L^2)}+\|\partial_{ttt} u\|_{L^\infty((0,T);L^2)}\leq \mathcal{M}},\   r\geq 1.\label{EQ_uregularity }
\end{align}
 \subsection{The properties of DOC and DCC kernels}
We here present the definitions and properties of DCC and DOC kernels, which play crucial roles in our analysis to overcome the difficulties resulted from the variable time steps. The DCC kernels introduced in \cite{llwsz18,llz18}  are defined by
\begin{equation}
\sum_{j=k}^n p_{n-j}^{(n)} b_{j-k}^{(j)} \equiv1, \quad  \forall  1\leq k \leq n,  \; 1 \leq n \leq N,\label{Def1}
\end{equation}
which satisfy
 \begin{equation} \label{did1}
\sum_{j=1}^np_{n-j}^{(n)}\mathcal{D}_2 u^j = \sum_{j=1}^np_{n-j}^{(n)} \sum_{l=1}^j b_{j-l}^{(j)} \nabla_\tau u^l = \sum_{l=1}^n\nabla_\tau u^l \sum_{j=l}^np_{n-j}^{(n)}b_{j-l}^{(j)}  =  u^n - u^0, \quad  \forall n\geq 1.
\end{equation}
The DOC kernels in  \cite{lz20}  is defined by
\begin{align}\label{005}
& \sum_{j=k}^n \theta_{n-j}^{(n)} b_{j-k}^{(j)} = \delta_{nk},  \quad  \forall  1\leq k \leq n,  \; 1 \leq n \leq N,
\end{align}
where the Kronecker delta symbol $ \delta_{nk}$ holds $ \delta_{nk} = 1$ if $n=k$ and $ \delta_{nk} = 0$ if $n\neq k$.
By exchanging the summation order, it is straightforward to verify that the DOC kernels satisfy
 \begin{equation} \label{006}
\sum_{j=1}^n\theta_{n-j}^{(n)}\mathcal{D}_2 u^j= \sum_{l=1}^n\nabla_\tau u^l \sum_{j=l}^n\theta_{n-j}^{(n)}b_{j-l}^{(j)}  =  u^n - u^{n-1}, \quad  1\leq n \leq N.
\end{equation}
Set $p^{(n)}_{-1}:=0,\forall n\geq 0$. The DCC and DOC kernels have the following relations (see \cite[Proposition 2.1]{zz21})
\begin{align}
p^{(n)}_{n-j}=\sum_{l=j}^n \theta^{(l)}_{l-j},\quad
\theta^{(n)}_{n-j} = p^{(n)}_{n-j}-p^{(n-1)}_{n-1-j},\quad \forall 1\leq j\leq n.\label{EQ_ptheta}
\end{align}

We now present several useful lemmas with details in \cite{lz20,zz21,zz21t}.
\begin{lemma}[\!\!\cite{zz21t}]\label{lemma01}
Assume the time step ratio $r_k$ satisfies \Ass{1}.    For any real sequence $\{w_k\}_{k=1}^n$ and any given small constant $0<\delta < r_{\max} \approx 4.8645$,  it holds
\begin{align}
&2w_k \sum_{j= 1}^k b^{(k)}_{k-j} w_j \geq \frac{r_{k+1}\sqrt{r_{\max}}}{(1+r_{k+1})}\frac{w_k^2}{\tau_k} - \frac{r_k\sqrt{r_{\max}}}{ (1+r_k)}\frac{w_{k-1}^2}{\tau_{k-1}}+\frac{\delta  w_k^2}{20\tau_k}, \quad k \geq 2,\label{EQ_bpositiveup} \\
&2\sum_{k=1}^nw_k \sum_{j= 1}^k b^{(k)}_{k-j} w_j \geq \frac{\delta }{20}\sum_{k=1}^n\frac{w_k^2}{\tau_k}\geq 0, \quad \text{for } n \geq 1.\label{EQ_bpositive}
\end{align}
\end{lemma}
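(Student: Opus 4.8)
The plan is to exploit the sparsity of the BDF2 kernel. Since $b_j^{(k)}=0$ for $j\ge 2$, the convolution collapses, for $k\ge 2$, to $\sum_{j=1}^k b_{k-j}^{(k)}w_j = b_0^{(k)}w_k + b_1^{(k)}w_{k-1}$, so the left-hand side of \eqref{EQ_bpositiveup} is the explicit quadratic form $2b_0^{(k)}w_k^2 + 2b_1^{(k)}w_kw_{k-1}$ in the two unknowns $w_k,w_{k-1}$. Substituting $b_0^{(k)}=(1+2r_k)/(\tau_k(1+r_k))$ and $b_1^{(k)}=-r_k^2/(\tau_k(1+r_k))$, and using $\tau_{k-1}=\tau_k/r_k$ to rewrite the telescoped term $w_{k-1}^2/\tau_{k-1}=r_k w_{k-1}^2/\tau_k$, I would recast \eqref{EQ_bpositiveup} (after multiplying through by $\tau_k>0$) as the statement that the form $A w_k^2 + B w_k w_{k-1} + C w_{k-1}^2\ge 0$, where $A = \frac{2(1+2r_k)}{1+r_k} - \frac{r_{k+1}\sqrt{r_{\max}}}{1+r_{k+1}} - \frac{\delta}{20}$, $B = -\frac{2r_k^2}{1+r_k}$, and $C = \frac{r_k^2\sqrt{r_{\max}}}{1+r_k}$.

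Because $C>0$, nonnegativity of this form is equivalent to $A\ge 0$ together with the discriminant bound $B^2\le 4AC$. A short computation shows $B^2\le 4AC$ collapses (the common factor $r_k^2/(1+r_k)$ cancels) to the single scalar inequality $A\ge \frac{r_k^2}{(1+r_k)\sqrt{r_{\max}}}$. Writing $\psi(r)=\frac{r\sqrt{r_{\max}}}{1+r}$ and $g(r)=\frac{2(1+2r)}{1+r}-\frac{r^2}{(1+r)\sqrt{r_{\max}}}$, the target becomes $g(r_k)-\psi(r_{k+1})\ge \delta/20$. The decisive algebraic fact here is the defining relation $r_{\max}^3=(1+2r_{\max})^2$, i.e. $r_{\max}^{3/2}=1+2r_{\max}$, which yields $\psi(r_{\max})=g(r_{\max})=\frac{1+2r_{\max}}{1+r_{\max}}$; thus $r=r_{\max}$ is precisely the equality case, confirming that $r_{\max}$ is the sharp threshold.

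To turn this into the strict margin $\delta/20$ I would use that \Ass{1} caps \emph{every} ratio, so both $r_k\le r_{\max}-\delta$ and $r_{k+1}\le r_{\max}-\delta$. Since $\psi$ is increasing, $\psi(r_{k+1})\le\psi(r_{\max}-\delta)$, and a one-variable analysis of $g$ (it rises on $(0,\hat r)$ and falls on $(\hat r,r_{\max})$, where $\hat r=-1+\sqrt{1+2\sqrt{r_{\max}}}\approx 1.33$) bounds $g(r_k)$ from below on $(0,r_{\max}-\delta]$. With $\rho=r_{\max}-\delta$ and $\Phi:=g-\psi$, the claim reduces to $\min\{2,g(\rho)\}-\psi(\rho)\ge(r_{\max}-\rho)/20$ for $\rho\in(0,r_{\max})$, which holds with room to spare at both ends: as $\rho\to 0$ the left side tends to $\approx 1.76$, while near $r_{\max}$ one has $\Phi(r_{\max})=0$ and $\Phi'(r_{\max})=g'(r_{\max})-\psi'(r_{\max})\approx -0.45\le -1/20$. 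I expect this constant-chasing step to be the main obstacle, since it is exactly where the sharpness of $r_{\max}$ and the choice of the harmless factor $20$ interact, and where one must remember that $r_{k+1}$ is constrained by the same bound.

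Finally, \eqref{EQ_bpositive} follows by summing \eqref{EQ_bpositiveup} over $2\le k\le n$: setting $E_k=\frac{r_{k+1}\sqrt{r_{\max}}}{1+r_{k+1}}\frac{w_k^2}{\tau_k}$, the first two right-hand terms are $E_k-E_{k-1}$ and telescope to $E_n-E_1$, with $E_n\ge 0$ discardable and $E_1=\psi(r_2)\frac{w_1^2}{\tau_1}$. It remains to treat the BDF1 level $k=1$ separately, where $\sum_{j=1}^1 b_{1-j}^{(1)}w_j=b_0^{(1)}w_1=w_1/\tau_1$ contributes $2w_1^2/\tau_1$; combined with $-E_1$ this leaves $(2-\psi(r_2))\frac{w_1^2}{\tau_1}$, and the same monotonicity argument gives $2-\psi(r_2)\ge \delta/20$, which yields the uniform lower bound $\frac{\delta}{20}\sum_{k=1}^n w_k^2/\tau_k\ge 0$.
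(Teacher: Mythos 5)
Your proposal is correct and takes essentially the same route as the source: the paper does not reprove this lemma but quotes it from \cite{zz21t}, whose proof likewise collapses the kernel to the two-term quadratic form $2b_0^{(k)}w_k^2+2b_1^{(k)}w_kw_{k-1}$, splits the cross term (your discriminant condition $B^2\le 4AC$ is equivalent to the AM--GM splitting used there), identifies the sharp threshold via $r_{\max}^{3/2}=1+2r_{\max}$, and obtains \eqref{EQ_bpositive} by exactly the telescoping plus separate $k=1$ BDF1 treatment you describe. The one step to finish rigorously is the constant-chasing you yourself flag: it suffices to note that $\psi'(\rho)=\sqrt{r_{\max}}/(1+\rho)^2>1/20$ on $(0,r_{\max})$ (so $2-\psi(\rho)-(r_{\max}-\rho)/20$ is decreasing and positive up to $\rho=2\sqrt{r_{\max}}$, where $g=2$) and that the numerator of $\Phi'$ is decreasing in $\rho$, giving $\Phi'\le -0.44<-1/20$ on $[2\sqrt{r_{\max}},r_{\max}]$, which is routine calculus.
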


\begin{corollary}[\!\!\cite{zz21}]\label{corollary01}
If the condition in lemma \ref{lemma01} are satisfied, then we have the  following inequality
\begin{align}
&\sum_{k=1}^nw_k \sum_{j= 1}^k \theta^{(k)}_{k-j} w_j \geq  0, \quad \text{for } n \geq 1.\label{EQ_bpositive1}
\end{align}
\end{corollary}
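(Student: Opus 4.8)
The plan is to exploit the fact that, viewed as lower-triangular matrices, the DOC kernels are exactly the inverse of the BDF2 convolution kernels, and then to inherit positivity directly from Lemma \ref{lemma01}. Fix $n\geq 1$ and let $\{w_k\}_{k=1}^n$ be an arbitrary real sequence. First I would introduce an auxiliary sequence $\{v_k\}_{k=1}^n$ as the unique solution of the lower-triangular system $w_k=\sum_{l=1}^k b_{k-l}^{(k)}v_l$ for $1\le k\le n$. This system is solvable because its diagonal entries $b_0^{(k)}=(1+2r_k)/(\tau_k(1+r_k))$ are strictly positive under \Ass{1}, so the correspondence $\{v_k\}\mapsto\{w_k\}$ is a bijection and every prescribed $\{w_k\}$ arises in this way.

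The second step is to verify that the DOC kernels recover $\{v_k\}$ from $\{w_k\}$, i.e. $v_m=\sum_{j=1}^m\theta_{m-j}^{(m)}w_j$ for all $m$. This is a short computation: substituting the definition of $w_j$, exchanging the order of summation, and invoking the defining identity \eqref{005} of the DOC kernels gives
\begin{align}
\sum_{j=1}^m\theta_{m-j}^{(m)}w_j=\sum_{j=1}^m\theta_{m-j}^{(m)}\sum_{l=1}^jb_{j-l}^{(j)}v_l=\sum_{l=1}^mv_l\sum_{j=l}^m\theta_{m-j}^{(m)}b_{j-l}^{(j)}=\sum_{l=1}^mv_l\,\delta_{ml}=v_m.\nonumber
\end{align}

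With these two identities in hand the conclusion is immediate. I would rewrite the target quadratic form by replacing the inner $\theta$-convolution with $v_k$ and then re-expanding $w_k$ through the $b$-convolution:
\begin{align}
\sum_{k=1}^n w_k\sum_{j=1}^k\theta_{k-j}^{(k)}w_j=\sum_{k=1}^n w_k v_k=\sum_{k=1}^n v_k\sum_{l=1}^k b_{k-l}^{(k)}v_l\geq 0,\nonumber
\end{align}
where the final inequality is precisely \eqref{EQ_bpositive} of Lemma \ref{lemma01} applied to the sequence $\{v_k\}_{k=1}^n$.

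I expect the only genuinely delicate point to be the inversion/bijectivity step: one must be certain that the auxiliary sequence $\{v_k\}$ exists for \emph{every} prescribed $\{w_k\}$ and that $\theta$ really is the convolution inverse of $b$, both of which rest on the positivity of the diagonal kernel $b_0^{(k)}$ and on the defining relation \eqref{005}. Once this algebraic inversion is secured, the positivity of the $\theta$-form is inherited from the $b$-form with no further estimates needed; in matrix language this is just the standard fact that if a real invertible matrix has positive semidefinite symmetric part, then so does its inverse.
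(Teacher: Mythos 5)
Your proof is correct and is essentially the same argument the paper relies on for this corollary (which it quotes from \cite{zz21}): because \eqref{005} makes the DOC kernels the convolution inverse of the BDF2 kernels, one substitutes the preimage sequence $\{v_k\}$ with $w_k=\sum_{l=1}^k b^{(k)}_{k-l}v_l$, identifies $v_m=\sum_{j=1}^m\theta^{(m)}_{m-j}w_j$ by exchanging summation order, and inherits nonnegativity of the $\theta$-form from \eqref{EQ_bpositive} applied to $\{v_k\}$. The two points you flag as delicate are indeed secure: the triangular system is invertible since $b_0^{(k)}=(1+2r_k)/(\tau_k(1+r_k))>0$, so no further estimates are needed.
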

\begin{lemma}[\!\!\cite{lz20}]\label{lemma03}
  The DOC kernels $\theta_{n-j}^{(n)}$ {satisfy} 
  \begin{align} \label{011}
 \theta_{n-j}^{(n)}&>0 \; \text{ for }\; 1\leq j\leq n, \qquad\text{ and } \qquad \sum_{j=1}^{n} \theta_{n-j}^{(n)} = \tau_n \; \text{ for } \; n\geq 1.
\end{align}
\end{lemma}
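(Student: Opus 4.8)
The plan is to establish the two assertions separately, exploiting the fact that the convolution kernels are banded: by construction $b_i^{(j)}=0$ whenever $i\geq 2$, so only $b_0^{(j)}$ and $b_1^{(j)}$ survive in the defining relation \eqref{005}. For the positivity I would read \eqref{005} one value of $k$ at a time. Taking $k=n$ gives $\theta_0^{(n)}b_0^{(n)}=1$, hence $\theta_0^{(n)}=1/b_0^{(n)}>0$ since $b_0^{(n)}=(1+2r_n)/(\tau_n(1+r_n))>0$. For $1\leq k\leq n-1$ the right-hand side of \eqref{005} vanishes, and because only the terms $j=k$ and $j=k+1$ contribute, the relation collapses to the two-term recurrence
\begin{equation*}
\theta_{n-k}^{(n)}b_0^{(k)}+\theta_{n-k-1}^{(n)}b_1^{(k+1)}=0,\qquad 1\leq k\leq n-1,
\end{equation*}
that is, $\theta_{n-k}^{(n)}=-\,\big(b_1^{(k+1)}/b_0^{(k)}\big)\,\theta_{n-k-1}^{(n)}$.

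Since $b_0^{(k)}=(1+2r_k)/(\tau_k(1+r_k))>0$ and $b_1^{(k+1)}=-r_{k+1}^2/(\tau_{k+1}(1+r_{k+1}))\leq 0$ for any positive step ratios, the multiplicative factor $-b_1^{(k+1)}/b_0^{(k)}$ is nonnegative (and strictly positive once $r_{k+1}>0$). Starting from $\theta_0^{(n)}>0$ and iterating in the index $m=n-k$ from $m=1$ up to $m=n-1$, a one-line induction then gives $\theta_{n-j}^{(n)}>0$ for every $1\leq j\leq n$. I note that this part needs only positivity of the ratios, not the sharper bound in \Ass{1}.

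For the normalization $\sum_{j=1}^{n}\theta_{n-j}^{(n)}=\tau_n$, rather than summing \eqref{005} over $k$ (which produces the unwieldy factor $\sum_i b_i^{(j)}$), I would feed the convolution identity \eqref{006} a well-chosen test sequence, namely the grid points themselves, $u^j=t_j$. Because the BDF formula is exact on affine data, a direct computation using $\nabla_\tau t_j=\tau_j$ and $\tau_{j-1}=\tau_j/r_j$ gives $\mathcal{D}_2 t_j=1$ for every $j\geq 2$, and likewise $\mathcal{D}_1 t_1=1$ for the first level. Substituting into \eqref{006} collapses its left-hand side to $\sum_{j=1}^{n}\theta_{n-j}^{(n)}$, while its right-hand side is $t_n-t_{n-1}=\tau_n$, which is exactly the claim. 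The only mild obstacle is the bookkeeping in the positivity recurrence, namely confirming that the two-term collapse of \eqref{005} holds precisely because the band width of $b$ is two; once that banded structure is isolated, both parts are short.
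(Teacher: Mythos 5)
Your proof is correct and is essentially the argument behind the paper's citation: the paper takes this lemma from \cite{lz20}, where positivity follows from exactly your two-term collapse of the defining relation \eqref{005} (unrolled there into the explicit product $\theta_{n-k}^{(n)}=\frac{1}{b_0^{(k)}}\prod_{i=k+1}^{n}\frac{r_i^2}{1+2r_i}>0$), and the normalization $\sum_{j=1}^{n}\theta_{n-j}^{(n)}=\tau_n$ follows, as in your argument, by testing the convolution identity \eqref{006} with $u^j=t_j$ and using $\mathcal{D}_1 t_1=\mathcal{D}_2 t_j=1$. Your side remark that positivity requires only $r_k>0$, and not the sharper bound in \Ass{1}, is likewise accurate.
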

\begin{proposition}\emph{(\!\!\!\cite[Proposition 2.2]{zz21}).} \label{Proposition_Pestimate}
{The DCC kernels $p^{(n)}_{n-k}$ defined in \eqref{Def1} satisfy}
\begin{align}
&\sum_{j=1}^n p^{(n)}_{n-j}=\sum_{k=1}^n\sum_{j=1}^k \theta^{(k)}_{k-j} = t_n, \qquad
p^{(n)}_{n-j}\leq2\tau. \label{EQ_pnkr}
\end{align}
\end{proposition}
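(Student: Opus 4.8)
The plan is to establish the two assertions in \eqref{EQ_pnkr} separately: first the exact evaluation of $\sum_{j=1}^n p^{(n)}_{n-j}$, which is elementary once the DOC--DCC relation is available, and then the uniform pointwise bound $p^{(n)}_{n-j}\le 2\tau$, which carries the real difficulty.

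For the sum identity I would start from the first relation in \eqref{EQ_ptheta}, namely $p^{(n)}_{n-j}=\sum_{l=j}^{n}\theta^{(l)}_{l-j}$, and substitute it into $\sum_{j=1}^{n}p^{(n)}_{n-j}$. Interchanging the order of summation over the triangular index set $\{(j,l):1\le j\le l\le n\}$ gives
\[
\sum_{j=1}^{n}p^{(n)}_{n-j}=\sum_{j=1}^{n}\sum_{l=j}^{n}\theta^{(l)}_{l-j}=\sum_{l=1}^{n}\sum_{j=1}^{l}\theta^{(l)}_{l-j},
\]
which is exactly the middle expression in \eqref{EQ_pnkr}. Applying the row-sum identity $\sum_{j=1}^{l}\theta^{(l)}_{l-j}=\tau_l$ from Lemma \ref{lemma03} and then $\sum_{l=1}^{n}\tau_l=t_n$ (since $\tau_l=t_l-t_{l-1}$ and $t_0=0$) closes this part.

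For the pointwise bound I would first convert the defining relation \eqref{Def1} into an explicit two-term recursion. Setting $k=n-m$ and $i=n-j$ in \eqref{Def1}, and using that $b^{(j)}_i=0$ for $i\ge 2$ so that only the two diagonals $b^{(\cdot)}_0,b^{(\cdot)}_1$ survive, one obtains $p^{(n)}_0=1/b^{(n)}_0$ and
\[
p^{(n)}_m\,b^{(n-m)}_0+p^{(n)}_{m-1}\,b^{(n-m+1)}_1=1,\qquad 1\le m\le n-1.
\]
Inserting $b^{(l)}_0=(1+2r_l)/(\tau_l(1+r_l))>0$ and $b^{(l)}_1=-r_l^2/(\tau_l(1+r_l))<0$, this reads
\[
p^{(n)}_m=\frac{(1+r_{n-m})\tau_{n-m}}{1+2r_{n-m}}+\frac{r_{n-m+1}(1+r_{n-m})}{(1+r_{n-m+1})(1+2r_{n-m})}\,p^{(n)}_{m-1}.
\]
Both coefficients are positive, so $p^{(n)}_m>0$ follows immediately by induction (consistent with the positivity of the $\theta$'s in Lemma \ref{lemma03} via \eqref{EQ_ptheta}); moreover the amplification factor $\beta_{l}:=r_{l+1}(1+r_{l})/[(1+r_{l+1})(1+2r_{l})]$ satisfies $\beta_l<1$ for every choice of positive ratios, since $(1+r_{l+1})(1+2r_l)-r_{l+1}(1+r_l)=1+2r_l+r_lr_{l+1}>0$.

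The hard part is to upgrade $0<\beta_l<1$ into the sharp bound $p^{(n)}_m\le 2\tau$. A naive induction that replaces $\tau_{n-m}$ by $\tau$ and $p^{(n)}_{m-1}$ by $2\tau$ fails, producing the factor $(1+r_{n-m})(1+3r_{n-m+1})/[(1+2r_{n-m})(1+r_{n-m+1})]$, which can exceed $2$ when $r_{n-m}$ is small and $r_{n-m+1}$ is large; and grouping the recursion into two-step blocks (where $\beta_l\beta_{l+1}$ stays below $\tfrac12$) only yields the weaker bound $p^{(n)}_m\le 4\tau$. To reach the constant $2$ one must retain the true magnitudes of the source terms $a_l=(1+r_l)\tau_l/(1+2r_l)$ in the unrolled series $p^{(n)}_m=\sum_{i=0}^{m}\big(\prod_{s=0}^{i-1}\beta_{n-m+s}\big)a_{n-m+i}$, exploiting that a large ratio $r_{l+1}$ forces its neighbour $\tau_l=\tau_{l+1}/r_{l+1}$ to be small relative to $\tau$, and then invoke \Ass{1} to control the accumulated products. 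This telescoping-with-decay estimate, rather than the sum identity, is the main obstacle; I note also that the monotonicity $p^{(n)}_{n-j}\le p^{(N)}_{N-j}$, which follows from $\theta^{(n)}_{n-j}=p^{(n)}_{n-j}-p^{(n-1)}_{n-1-j}>0$, reduces the task to bounding a single limiting diagonal sum.
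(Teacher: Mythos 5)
Your treatment of the two assertions in \eqref{EQ_pnkr} has very different status, and it is worth noting first that the paper itself offers no proof of this proposition at all: it is quoted verbatim from \cite[Proposition 2.2]{zz21}, so any self-contained argument is already more than the paper provides. Your proof of the sum identity is complete and correct: substituting the first relation of \eqref{EQ_ptheta} into $\sum_{j=1}^n p^{(n)}_{n-j}$, interchanging the triangular summation, and invoking $\sum_{j=1}^l\theta^{(l)}_{l-j}=\tau_l$ from Lemma \ref{lemma03} is exactly the intended argument. The pointwise bound $p^{(n)}_{n-j}\le 2\tau$ is another matter: you correctly derive the two-term recursion $p^{(n)}_0=1/b^{(n)}_0$, $p^{(n)}_m=a_{n-m}+\beta_{n-m}\,p^{(n)}_{m-1}$ with $a_l=(1+r_l)\tau_l/(1+2r_l)$ and $\beta_l=r_{l+1}(1+r_l)/[(1+r_{l+1})(1+2r_l)]$, and you correctly verify positivity and $\beta_l<1$, but then you explicitly defer the decisive estimate, calling the ``telescoping-with-decay'' step the main obstacle. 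An acknowledged, unexecuted obstacle is a genuine gap: as written, half of the proposition is unproven.

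The gap closes with one observation you came within a line of making. The induction hypothesis $p^{(n)}_{m-1}\le 2\tau$ propagates precisely when $a_l\le 2\tau(1-\beta_l)$ for $l=n-m$, which by your own computation $(1+r_{l+1})(1+2r_l)-r_{l+1}(1+r_l)=1+2r_l+r_lr_{l+1}$ is equivalent to
\begin{equation*}
\tau_l(1+r_l)(1+r_{l+1})\;\le\;2\tau\,(1+2r_l+r_lr_{l+1}).
\end{equation*}
Your ``naive induction fails'' diagnosis (the factor $(1+r_l)(1+3r_{l+1})/[(1+2r_l)(1+r_{l+1})]$ exceeding $2$) arises only because you bounded $\tau_l\le\tau$ and the ratio factor separately. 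Keep $\tau_l$ coupled to $r_{l+1}$ instead: $\tau_l(1+r_{l+1})=\tau_l+\tau_{l+1}\le 2\tau$, while trivially $1+r_l\le 1+2r_l+r_lr_{l+1}$; multiplying the two gives the required inequality for \emph{all} positive ratios (the case $l=1$, $r_1\equiv 0$ included). Together with the base case $p^{(n)}_0=\tau_n(1+r_n)/(1+2r_n)\le\tau$, induction on $m$ yields $p^{(n)}_m\le 2\tau$ with no use of \Ass{1}, no two-step blocking (which, as you observe, only produces $4\tau$), and no unrolled series or accumulated-product estimate. So the difficulty you flag is illusory rather than structural — exactly the large-$r_{l+1}$ compensation $\tau_l=\tau_{l+1}/r_{l+1}$ that you name in passing is all that is needed, applied inside the one-step induction. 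The closing monotonicity remark ($p^{(n)}_{n-j}$ increasing in $n$ by positivity of the $\theta$'s) is correct but superfluous once this is done.
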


\section{The $L^\infty$ boundedness of the fully discrete solution $U^n_h$}\label{Section_3}
The key point in this paper, to deal with the nonlinear term in \eqref{008},  is the $L^\infty$ boundedness of the numerical solution $U^n_h$. To this end, we present several useful lemmas as follows, including the discrete Gr\"{o}nwall inequality and the temporal  consistency errors. For convenience, here and after we denote $\|\cdot\|:=\|\cdot\|_{L^2}$.
\begin{lemma}[Discrete Gr\"{o}nwall inequality] \label{lemma04}
Assume $\lambda>0$ and the sequences $\{v_j\}_{j=1}^N$ and $\{\eta_j\}_{j=0}^N$ are nonnegative. If
$$
v_n \leq \lambda \sum_{j=1}^{n-1} \tau_j v_j + \sum_{j=0}^n \eta_j, \quad \text{for} \quad 1\leq n \leq N,
$$
then it holds
$$
v_n \leq \exp\big(\lambda t_{n-1}\big) \sum_{j=0}^n \eta^j, \quad \text{for} \quad 1\leq n \leq N.
$$
\end{lemma}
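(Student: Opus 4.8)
The plan is to prove the inequality by induction on $n$, combined with an integral-comparison estimate that converts the discrete weighted sum into the exponential factor. Throughout, write $S_n \defeq \sum_{j=0}^n \eta_j$ and observe that, since every $\eta_j \geq 0$, the partial sums are monotone: $S_k \leq S_n$ whenever $k \leq n$. This monotonicity is precisely what allows me to factor $S_n$ out of the right-hand side after applying the induction hypothesis, so that the problem reduces to estimating a purely deterministic weighted sum of the time steps.

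For the base case $n=1$, the hypothesis reads $v_1 \leq \lambda \sum_{j=1}^{0} \tau_j v_j + S_1 = S_1$, the empty sum vanishing, and since $t_0 = 0$ gives $\exp(\lambda t_0)=1$ this is exactly the claimed bound. For the inductive step I assume $v_k \leq \exp(\lambda t_{k-1}) S_k$ for all $k<n$. Inserting this into the hypothesis and using $S_k \leq S_n$ yields
$$
v_n \leq \lambda \sum_{j=1}^{n-1} \tau_j \exp(\lambda t_{j-1}) S_j + S_n \leq S_n \Big( 1 + \lambda \sum_{j=1}^{n-1} \tau_j \exp(\lambda t_{j-1}) \Big).
$$
The crux is then to show $1 + \lambda \sum_{j=1}^{n-1} \tau_j \exp(\lambda t_{j-1}) \leq \exp(\lambda t_{n-1})$. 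I would establish this by comparing the sum with an integral: since $e^{\lambda t}$ is increasing and $\tau_j = t_j - t_{j-1}$, one has $\tau_j \exp(\lambda t_{j-1}) \leq \int_{t_{j-1}}^{t_j} e^{\lambda t}\,\zd t$ for each $j$. Summing over $1\leq j\leq n-1$ telescopes the integration intervals to $[t_0,t_{n-1}]=[0,t_{n-1}]$, giving $\lambda \sum_{j=1}^{n-1} \tau_j \exp(\lambda t_{j-1}) \leq \exp(\lambda t_{n-1}) - 1$. Adding $1$ produces the desired bound, and substituting it back yields $v_n \leq \exp(\lambda t_{n-1}) S_n$, which closes the induction.

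I expect the only genuinely delicate point to be this integral-comparison step; everything else is bookkeeping that rests on the nonnegativity of $\eta_j$ and on $t_0=0$. An alternative to the integral bound is the elementary estimate $1+x\leq e^x$ applied in a telescoping product $\prod_{j}(1+\lambda\tau_j)$, but that route produces $\prod_{j=1}^{n-1}(1+\lambda\tau_j)$ and then still requires $1+\lambda\tau_j \leq e^{\lambda\tau_j}$ together with $\sum_j \tau_j = t_{n-1}$ to recover the stated exponential. The integral argument reaches the clean constant $\exp(\lambda t_{n-1})$ in one stroke and is therefore the path I would follow.
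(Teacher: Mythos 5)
Your induction argument is correct, and it follows exactly the route the paper indicates: the paper states that Lemma \ref{lemma04} ``can be proved by the standard induction hypothesis'' and omits the details, which your proof supplies. The one potentially delicate point, the bound $1+\lambda\sum_{j=1}^{n-1}\tau_j e^{\lambda t_{j-1}}\leq e^{\lambda t_{n-1}}$ via comparison with $\lambda\int_0^{t_{n-1}}e^{\lambda t}\,\mathrm{d}t$ (using $t_0=0$ and monotonicity of the partial sums $S_n$), is handled correctly.
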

Lemma \ref{lemma04} can be proved by the  standard induction hypothesis and we omit it here.
\begin{lemma}[\!\!\cite{lwz19}]\label{lemma05}
Assume the regularity condition \eqref{EQ_uregularity } holds  and the nonlinear function $f=f(u)\in C^2(\mathbb{R})$. Denote the local truncation error by
\begin{align}\label{012}
R_f^j=f(u^j)-f(u^{j-1})-f'(u^{j-1})\nabla_\tau u^j, \quad 1\leq j\leq N.
\end{align}
Then we have the following estimates of the  truncation error
\begin{eqnarray} \label{013}
&&  \|R_f^j\|\leq C_f \tau_j^2, \quad 1\leq j\leq N,
\end{eqnarray}
where $C_f:=\frac12 C_\Omega \sup_{|u|\leq C_\Omega\mathcal{M}}|f''(u)|\mathcal{M}^2$.
\end{lemma}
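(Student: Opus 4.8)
The plan is to read $R_f^j$ as the second-order Taylor remainder of $f$ expanded about $u^{j-1}$, and then to turn the resulting quadratic increment into the factor $\tau_j^2$ by means of the regularity assumption \eqref{EQ_uregularity }. First, fixing $\bm{x}\in\Omega$ and applying Taylor's theorem with Lagrange remainder to the scalar map $s\mapsto f(s)$ on the segment joining $u^{j-1}(\bm{x})$ and $u^j(\bm{x})$, the hypothesis $f\in C^2(\mathbb{R})$ furnishes an intermediate value $\xi=\xi(\bm{x})$ with
\[
f(u^j)=f(u^{j-1})+f'(u^{j-1})\brab{u^j-u^{j-1}}+\tfrac12 f''(\xi)\brab{u^j-u^{j-1}}^2,
\]
so that, comparing with \eqref{012}, the remainder takes the pointwise form $R_f^j=\tfrac12 f''(\xi)\brab{\nabla_\tau u^j}^2$.

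I would then isolate two ingredients. \textbf{(i) Pointwise control of $f''$.} Because $r\geq 1$ and $d\leq 3$, one has $r+1>d/2$, so the Sobolev embedding $H^{r+1}(\Omega)\hookrightarrow L^\infty(\Omega)$ holds with a constant $C_\Omega$ depending only on $\Omega$; combined with \eqref{EQ_uregularity } this gives $\mynorm{u(\cdot,t)}_{L^\infty}\leq C_\Omega\mynorm{u(\cdot,t)}_{H^{r+1}}\leq C_\Omega\mathcal{M}$ for all $t$. Hence $\abs{u^{j-1}(\bm{x})},\abs{u^{j}(\bm{x})}\leq C_\Omega\mathcal{M}$, and since $\xi(\bm{x})$ lies between them, $\abs{f''(\xi(\bm{x}))}\leq\sup_{\abs{u}\leq C_\Omega\mathcal{M}}\abs{f''(u)}$ uniformly in $\bm{x}$ (the supremum being finite by continuity of $f''$ on this compact set). \textbf{(ii) Control of the increment.} Writing $\nabla_\tau u^j=\int_{t_{j-1}}^{t_j}\partial_t u\zd t$ and invoking Minkowski's integral inequality together with \eqref{EQ_uregularity } yields the two bounds $\mynorm{\nabla_\tau u^j}\leq\int_{t_{j-1}}^{t_j}\mynorm{\partial_t u}\zd t\leq\tau_j\mathcal{M}$ and $\mynorm{\nabla_\tau u^j}_{L^\infty}\leq C_\Omega\int_{t_{j-1}}^{t_j}\mynorm{\partial_t u}_{H^{r+1}}\zd t\leq C_\Omega\tau_j\mathcal{M}$.

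Finally I would assemble the estimate. Pulling $f''$ out in $L^\infty$ and splitting the quadratic factor through $\mynorm{g^2}\leq\mynorm{g}_{L^\infty}\mynorm{g}$ gives
\[
\mynorm{R_f^j}\leq\tfrac12\sup_{\abs{u}\leq C_\Omega\mathcal{M}}\abs{f''(u)}\;\mynorm{\nabla_\tau u^j}_{L^\infty}\mynorm{\nabla_\tau u^j},
\]
and inserting the two bounds from (ii) produces $\mynorm{R_f^j}\leq\tfrac12 C_\Omega\sup_{\abs{u}\leq C_\Omega\mathcal{M}}\abs{f''(u)}\,\mathcal{M}^2\tau_j^2=C_f\tau_j^2$, which is exactly \eqref{013}. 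The one point needing care is ingredient (i): the argument of $f''$ must be kept inside the compact set $\{\abs{u}\leq C_\Omega\mathcal{M}\}$ on which $f''$ is controlled, and this is precisely what the Sobolev embedding combined with the uniform-in-time $H^{r+1}$ bound of \eqref{EQ_uregularity } secures. The remaining steps are a routine use of Taylor's theorem, H\"older's inequality, and the fundamental theorem of calculus, so no compatibility or smallness condition on $\tau$ or $h$ enters at this stage.
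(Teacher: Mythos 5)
Your proof is correct and takes essentially the same approach as the paper's: a Taylor expansion of $f$ about $u^{j-1}$, the splitting $\mynorm{g^2}\leq\mynorm{g}_{L^\infty}\mynorm{g}$ of the quadratic increment, and the two bounds $\mynorm{\nabla_\tau u^j}\leq\mathcal{M}\tau_j$ and $\mynorm{\nabla_\tau u^j}_{L^\infty}\leq C_\Omega\mathcal{M}\tau_j$ obtained from \eqref{EQ_uregularity } via the fundamental theorem of calculus and the embedding into $L^\infty$. The only cosmetic difference is that you use the Lagrange form of the remainder $\tfrac12 f''(\xi)\brab{\nabla_\tau u^j}^2$ whereas the paper writes the integral form $\brab{\nabla_\tau u^j}^2\int_0^1 f''(u^{j-1}+s\nabla_\tau u^j)(1-s)\zd s$; both yield the factor $\tfrac12$ and the identical constant $C_f$.
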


\begin{lemma}\label{lemma202147}
Assume the regularity condition \eqref{EQ_uregularity } holds, the truncation error $R_t^j$ has the properties:
\begin{align}
&\|R_t^1\| \leq \frac{\mathcal{M}}2\tau_1, \quad
\|R_t^j\| \leq \frac32\mathcal{M}\tau_j\tau,\quad 2 \leq j\leq N.
\end{align}
\end{lemma}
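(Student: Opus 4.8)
The plan is to read $R_t^j$ as the local temporal consistency error $R_t^j=\mathcal{D}_2 u^j-\partial_t u(t_j)$ obtained by inserting the exact solution into the discrete operator, and to treat the first step ($j=1$, BDF1) and the remaining steps ($j\geq 2$, BDF2) separately. For $j=1$ I would Taylor-expand $u^0=u(t_1-\tau_1)$ about $t_1$ to first order with integral remainder, so that $R_t^1=\tfrac{1}{\tau_1}(u^1-u^0)-\partial_t u(t_1)=-\tfrac{1}{\tau_1}\int_{t_0}^{t_1}(s-t_0)\,\partial_{tt}u(s)\,\zd s$; taking the $L^2$-norm inside the integral and using $\|\partial_{tt}u\|_{L^\infty((0,T);L^2)}\leq\mathcal{M}$ from \eqref{EQ_uregularity } gives $\|R_t^1\|\leq\tfrac{1}{\tau_1}\mathcal{M}\int_{t_0}^{t_1}(s-t_0)\,\zd s=\tfrac{\mathcal{M}}{2}\tau_1$, which is the first assertion.

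For $j\geq 2$ the crucial preliminary step is to record that the variable-step BDF2 operator is exact on polynomials of degree $\leq 2$. Writing $\mathcal{D}_2 u^j=b_0^{(j)}(u^j-u^{j-1})+b_1^{(j)}(u^{j-1}-u^{j-2})$ and substituting $b_0^{(j)}=(1+2r_j)/(\tau_j(1+r_j))$, $b_1^{(j)}=-r_j^2/(\tau_j(1+r_j))$ together with $\tau_{j-1}=\tau_j/r_j$, one checks the two consistency identities $b_0^{(j)}\tau_j+b_1^{(j)}\tau_{j-1}=1$ and $b_0^{(j)}\tau_j^2+b_1^{(j)}\tau_{j-1}(\tau_{j-1}+2\tau_j)=0$. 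I would then expand $u^{j-1}$ and $u^{j-2}$ about $t_j$ to second order with integral remainder in $\partial_{ttt}u$ and insert these into $R_t^j=b_0^{(j)}(u^j-u^{j-1})+b_1^{(j)}(u^{j-1}-u^{j-2})-\partial_t u(t_j)$. By the two identities the coefficients of $\partial_t u(t_j)$ and $\partial_{tt}u(t_j)$ vanish identically, leaving $R_t^j$ as an explicit linear combination of integral remainders of $\partial_{ttt}u$ over $[t_{j-2},t_j]$, whose formal leading coefficient is $-\tfrac{1}{6}\tau_j(\tau_j+\tau_{j-1})$. Bounding the remainders in $L^2$ by $\|\partial_{ttt}u\|_{L^\infty((0,T);L^2)}\leq\mathcal{M}$ and using the coarse estimates $\tau_{j-1}\leq\tau$ and $\tau_j+\tau_{j-1}\leq 2\tau$ then yields $\|R_t^j\|\leq\tfrac32\mathcal{M}\tau_j\tau$.

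The step I expect to be the main obstacle is keeping the $O(\tau^2)$ bound uniform in the step ratio $r_j$ up to $r_{\max}$. A naive triangle-inequality bound applied term by term to $b_0^{(j)}(u^j-u^{j-1})$ and $b_1^{(j)}(u^{j-1}-u^{j-2})$ destroys the cancellation encoded in the two consistency identities and reintroduces spurious first-order contributions whose coefficients grow with $r_j$; this would deliver only an $O(\tau_j)$ bound with an $r_j$-dependent constant. The cancellation must therefore be retained at the level of the integrand — either by carrying the $\partial_{ttt}u$ integral remainder through both difference quotients before estimating, or equivalently by writing $R_t^j=\int_{t_{j-2}}^{t_j}K_j(s)\,\partial_{tt}u(s)\,\zd s$, observing that the consistency identities force $\int_{t_{j-2}}^{t_j}K_j(s)\,\zd s=0$, and subtracting a constant (the value of $\partial_{tt}u$ at a reference node) before applying the $L^2$ bound. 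Once this is done the $r_j$-dependence collapses to a factor that stays bounded by an absolute constant for all $0<r_j\leq r_{\max}$ (the sharp leading constant being $\tfrac16$, with $\tfrac{\tau_j(\tau_j+\tau_{j-1})}{6}\leq\tfrac{\tau_j\tau}{3}\leq\tfrac32\tau_j\tau$), and the generous rounding $\tau_j+\tau_{j-1}\leq 2\tau$ comfortably accounts for the stated constant $\tfrac32$.
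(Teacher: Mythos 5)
Your overall route coincides with the paper's: for $j=1$ the paper's appendix performs exactly your integral-remainder computation (writing the backward difference quotient minus $\partial_t u(t_1)$ as an average of $\partial_t u(s)-\partial_t u(t_1)$ and bounding by $\|\partial_{tt}u\|_{L^\infty((0,T);L^2)}\leq\mathcal{M}$), and for $j\geq 2$ the paper likewise uses Taylor expansion with integral remainder together with exactness of the variable-step BDF2 operator on quadratics, quoting the representation
\begin{align*}
R_t^j = -\frac{1+r_j}{2\tau_j}\int_{t_{j-1}}^{t_j}(t-t_{j-1})^2\,\partial_{ttt}u\,\mathrm{d}t+\frac{r_j}{2(1+r_j)\tau_{j-1}}\int_{t_{j-2}}^{t_j}(t-t_{j-2})^2\,\partial_{ttt}u\,\mathrm{d}t .
\end{align*}
The genuine gap is your last quantitative step. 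You assert that, once the cancellation from the two consistency identities is retained, ``the $r_j$-dependence collapses'' and the rigorous constant equals the formal leading coefficient $\tfrac16\tau_j(\tau_j+\tau_{j-1})$. That identification is valid only when $\partial_{ttt}u$ is constant in time. Under the actual hypothesis $\|\partial_{ttt}u\|_{L^\infty((0,T);L^2)}\leq\mathcal{M}$, what controls $\|R_t^j\|$ is the (weighted) $L^1$-norm of the Peano kernel, not the modulus of its mean: the kernel changes sign on $(t_{j-2},t_j)$, the two remainder integrals above cannot cancel for an arbitrary third derivative, and the triangle inequality applied to the representation gives $\|R_t^j\|\leq\tfrac{\mathcal{M}}{6}(1+r_j)\tau_j(2\tau_j+\tau_{j-1})$ --- already three times your claimed constant at the uniform ratio $r_j=1$, and growing linearly in $r_j$. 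Hence your chain $\tfrac{\tau_j(\tau_j+\tau_{j-1})}{6}\leq\tfrac{\tau_j\tau}{3}\leq\tfrac32\tau_j\tau$ does not follow from your argument. The same objection applies to your mean-zero-kernel variant: subtracting $\partial_{tt}u(t_j)$ yields $\|R_t^j\|\leq\mathcal{M}\int_{t_{j-2}}^{t_j}|K_j(s)|\,(t_j-s)\,\mathrm{d}s$, and this weighted $L^1$-norm again carries the ratio-dependent factor.

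The repair is simply to keep that factor and invoke \Ass{1}: with $\tau_{j-1}=\tau_j/r_j$ one has $(1+r_j)\tau_j(2\tau_j+\tau_{j-1})=\big(3+2r_j+\tfrac1{r_j}\big)\tau_j^2\leq\big(3+2r_j+\tfrac1{r_j}\big)\tau_j\tau$, so $\|R_t^j\|\leq C(r_{\max})\,\mathcal{M}\tau_j\tau$ uniformly for all admissible ratios. Note in passing that the specific factor $\tfrac32$ requires $2r_j+1/r_j\leq 6$, i.e.\ $r_j\lesssim 2.9$; for $r_j$ near $r_{\max}\approx 4.8645$ this argument (as written, also in the paper's own appendix) gives roughly $\tfrac{13}{6}\mathcal{M}\tau_j\tau$. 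This discrepancy is immaterial downstream, since every later estimate uses only $\|R_t^j\|\leq C\mathcal{M}\tau_j\tau$ with an absolute constant, but it confirms that the constant cannot be made ratio-free the way your proposal claims: the assertion that the sharp constant is $\tfrac16$ uniformly in $r_j$ is the error to fix.
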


The proofs of Lemmas \ref{lemma05} and \ref{lemma202147} mainly use the Taylor expansion,  and are left to Appendix  \ref{App61} and \ref{App62} for brevity.

\subsection{Analysis of a semi-discrete scheme}
{As indicated in \eqref{EQ_1444}},   we first consider  the  boundedness of the solution to semi-discrete scheme \eqref{018}  in this subsection,  and leave the boundedness of the error $\|U^n-U_h^n\|_{L^\infty}$ in next subsection.

 Let $e^n=u^n-U^n \;(n=0,1,\ldots,N)$. Subtracting (\ref{018}) from (\ref{001}), one has
 \begin{align}\label{020}
\mathcal{D}_2 e^n=\Delta e^n  +R_t^n+R_f^n+E_1^n,
\end{align}
where
 $
E_1^n=f(u^{n-1})+f'(u^{n-1})\nabla_\tau u^n- f(U^{n-1}) - f'(U^{n-1})\nabla_\tau U^n.
$
\begin{theorem}\label{theorem01}
Assume the conditions \Ass{1} and \Ass{2} and the regularity condition \eqref{EQ_uregularity } hold,  and the nonlinear function $f\in C^2(\mathbb{R})$.   Then the semi-discrete system (\ref{018}) has a unique solution $U^n$. Moreover,  there exists an $\tau^{**}>0$  such that  the following estimates hold for all $\tau\leq\tau^{**}$
\begin{align}
&\|e^n\|_{H^1} \leq  C_1\tau^\frac32,\label{022}\\
&\|U^n\|_{L^\infty} \leq C_\Omega  \|U^k\|_{H^2}\leq  C_\Omega  (\mathcal{M}+1),\label{024}\\
&\sum_{j=1}^n\|\nabla_\tau e^j\|_{H^2} \leq C_{2}. \label{023}
\end{align}
More precisely, in (\ref{022}), we have $\|e^n\| \leq C_3\tau^2$ and $\|\nabla e^n\|\leq C_4\tau^\frac32$, where  
$C_i\; (i= 1,2,3,4)$ are positive constants independent of $\tau$.
\end{theorem}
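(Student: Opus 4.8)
The plan is to argue by induction on the time level $n$, using the temporal--spatial error splitting. The inductive hypothesis is that all the asserted bounds, and in particular the $H^2$-bound $\|U^{n-1}\|_{H^2}\le \mathcal{M}+1$ (hence $\|U^{n-1}\|_{L^\infty}\le C_\Omega(\mathcal{M}+1)$ by the embedding $H^2\hookrightarrow L^\infty$, valid for $d\le3$), hold up to level $n-1$; this boundedness is what makes $f(U^{n-1})$ and $f'(U^{n-1})$ controllable through $f\in C^2$. For existence and uniqueness of $U^n$ I would observe that the linearization makes \eqref{018} a \emph{linear} elliptic problem for $U^n$: rearranging, $\big(b_0^{(n)}-f'(U^{n-1})\big)U^n-\Delta U^n$ equals a known right-hand side. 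Since $b_0^{(n)}=(1+2r_n)/(\tau_n(1+r_n))\sim\tau_n^{-1}$ dominates the bounded multiplier $f'(U^{n-1})$ once $\tau\le\tau^{**}$, the associated bilinear form is coercive on $H_0^1$; Lax--Milgram gives a unique weak solution and elliptic regularity on the convex domain $\Omega$ promotes it to $H^2\cap H_0^1$.

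For the $H^1$-type bounds \eqref{022} I would test the error equation \eqref{020} with $\nabla_\tau e^n$ and sum over $n$. Writing $w_j=\nabla_\tau e^j$ so that $\sum_j b_{k-j}^{(k)}w_j=\mathcal{D}_2 e^k$, the positivity Lemma \ref{lemma01} bounds $\sum_k(\nabla_\tau e^k,\mathcal{D}_2 e^k)$ from below by $\tfrac{\delta}{40}\sum_k\|\nabla_\tau e^k\|^2/\tau_k$; integrating the diffusion term by parts in space and telescoping in time produces $\tfrac12\|\nabla e^n\|^2+\tfrac12\sum_k\|\nabla\nabla_\tau e^k\|^2$ on the left. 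On the right, the consistency terms are split by Young's inequality, absorbing the $\nabla_\tau e$ contributions and leaving the data term $\sum_k\tau_k\|R_t^k+R_f^k\|^2$, which by Lemmas \ref{lemma05}--\ref{lemma202147} is $\mathcal{O}(\tau^3)$---dominated by the single first-order BDF1 step $\tau_1\|R_t^1\|^2=\mathcal{O}(\tau^3)$. The nonlinear remainder $E_1^n$ is estimated by the Lipschitz bound $\|E_1^n\|\le C(\|e^{n-1}\|+\|\nabla_\tau e^n\|)$ (using the induction bound on $U^{n-1}$), whose $\|e^{n-1}\|$ part, via Poincar\'e, becomes the Gr\"onwall term $\lambda\sum_k\tau_k\|\nabla e^{k-1}\|^2$. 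The discrete Gr\"onwall inequality (Lemma \ref{lemma04}) then yields $\|\nabla e^n\|=\mathcal{O}(\tau^{3/2})$ together with $\sum_k\|\nabla_\tau e^k\|^2/\tau_k=\mathcal{O}(\tau^3)$.

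The refined $L^2$ bound $\|e^n\|=\mathcal{O}(\tau^2)$ is the delicate part, and I would obtain it by the DOC/DCC technique. Multiplying \eqref{020} by $\theta_{n-j}^{(n)}$ and using \eqref{006} gives $\nabla_\tau e^n=\sum_j\theta_{n-j}^{(n)}\Delta e^j+\sum_j\theta_{n-j}^{(n)}R^j$; testing with $e^n$ and summing over $n$, Corollary \ref{corollary01} (applied with $w_j=\nabla e^j$) makes the accumulated diffusion term $-\sum_n\sum_j\theta_{n-j}^{(n)}(\nabla e^j,\nabla e^n)$ nonpositive, so the left side telescopes to $\tfrac12\|e^m\|^2$. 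The crux is then that the first-level residual $R^1$, although only $\mathcal{O}(\tau)$, enters with the \emph{accumulated} DOC weight $\sum_{n=1}^m\theta_{n-1}^{(n)}=p_{m-1}^{(m)}\le2\tau$ (by \eqref{EQ_ptheta} and Proposition \ref{Proposition_Pestimate}); its total contribution is therefore $\mathcal{O}(\tau)\cdot2\tau\cdot\max_n\|e^n\|=\mathcal{O}(\tau^2)\max_n\|e^n\|$, which is precisely the mechanism showing that the BDF1 start does \emph{not} degrade the global second-order $L^2$ accuracy. The genuinely second-order residuals $R^j$ ($j\ge2$) are handled the same way through $\sum_{n\ge j}\theta_{n-j}^{(n)}=p_{m-j}^{(m)}\le2\tau$, and the $\nabla_\tau e$ part of $E_1^j$ is made $\mathcal{O}(\tau^2)$ after converting the $\ell^2$ bound $\sum_k\|\nabla_\tau e^k\|^2/\tau_k=\mathcal{O}(\tau^3)$ into the $\ell^1$ bound $\sum_k\|\nabla_\tau e^k\|=\mathcal{O}(\sqrt N\,\tau^2)=\mathcal{O}(\tau)$ by Cauchy--Schwarz---this is exactly where \Ass{2} ($\tau\le\hat C/\sqrt N$) is used. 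Passing to $E_m=\max_{n\le m}\|e^n\|$ reduces the remaining $\|e^{n-1}\|$-coupling to the standard form $\tfrac12E_m\le\mathcal{O}(\tau^2)+C\sum_n\tau_n E_n$, and Lemma \ref{lemma04} closes it as $\|e^n\|=\mathcal{O}(\tau^2)$.

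Finally, to re-establish \eqref{024} and \eqref{023} and thereby close the induction, I would pass to $H^2$ by elliptic regularity, $\|e^n\|_{H^2}\le C\|\Delta e^n\|=C\|\mathcal{D}_2 e^n-R^n\|$, controlling $\mathcal{D}_2 e^n=b_0^{(n)}\nabla_\tau e^n+b_1^{(n)}\nabla_\tau e^{n-1}$ through the increment bounds already obtained and again invoking \Ass{2} to make $\sum_j\|\nabla_\tau e^j\|_{H^2}$ small; since $e^n=\sum_{j\le n}\nabla_\tau e^j$, this forces $\|e^n\|_{H^2}\le1$ for $\tau\le\tau^{**}$, whence $\|U^n\|_{H^2}\le\|u^n\|_{H^2}+\|e^n\|_{H^2}\le\mathcal{M}+1$ and the embedding gives \eqref{024}. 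I expect the main obstacle to be this last $H^2/L^\infty$ closure together with the first-step analysis in the $L^2$ estimate: one must simultaneously show that the low-order BDF1 consistency is damped to $\mathcal{O}(\tau^2)$ globally and that the elliptic-regularity control of $\mathcal{D}_2 e^n$ (whose prefactor $b_0^{(n)}\sim\tau_n^{-1}$ is large) does not spoil the bound, both of which hinge on the accumulated-kernel estimates and on \Ass{2}.
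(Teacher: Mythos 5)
Your overall architecture---induction on the time level, the DOC convolution of the error equation with the accumulated DCC weights $p^{(n)}_{n-j}\le 2\tau$ damping the $\mathcal{O}(\tau)$ BDF1 residual to $\mathcal{O}(\tau^2)$ globally, and the kernel positivity for the energy estimates---matches the paper, and your reordering (establishing $\|\nabla e^n\|=\mathcal{O}(\tau^{3/2})$ first via Poincar\'e and Gr\"onwall, then upgrading to $\|e^n\|=\mathcal{O}(\tau^2)$ by the DOC argument) is legitimate; the paper does the $L^2$ bound first and feeds it into the $H^1$ estimate through $\|E_1^l\|\le C_5(\|e^{l-1}\|+\|e^l\|)$, which also lets it avoid your detour through $\sum_k\|\nabla_\tau e^k\|$ and \Ass{2} in the $L^2$ step. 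The genuine gap is the $H^2$/$L^\infty$ closure, which is exactly what the induction must re-establish in \eqref{024}: the route you sketch does not work under variable steps. From $\Delta e^n=\mathcal{D}_2 e^n-R^n$ with $b_0^{(n)}\sim\tau_n^{-1}$, the best available increment bounds ($\|\nabla_\tau e^n\|\le\|e^n\|+\|e^{n-1}\|\lesssim\tau^2$, or $\|\nabla_\tau e^n\|\lesssim\tau_n^{1/2}\tau^{3/2}$ from your weighted $\ell^2$ estimate) give only $\|\Delta e^n\|\lesssim\tau^2/\tau_n$ or $\tau^{3/2}/\tau_n^{1/2}$, which is unbounded when $\tau_n\ll\tau$; \Ass{2} restricts only the \emph{maximum} step and imposes no lower bound on individual $\tau_n$ (the random meshes of Section \ref{Section_5} are exactly of this kind), so this quantity need not even be $\mathcal{O}(1)$ and the bound $\|e^k\|_{H^2}\le 1$ cannot be forced this way. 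Your fallback---``invoke \Ass{2} to make $\sum_j\|\nabla_\tau e^j\|_{H^2}$ small''---is circular, since any per-step control of $\nabla_\tau\Delta e^j$ through the equation reintroduces $\mathcal{D}_2 e^j$ with the same $\tau_j^{-1}$ factor. You correctly identify this as ``the main obstacle,'' but you do not overcome it, so both \eqref{024} (hence the induction itself) and \eqref{023} remain unproven in your proposal.

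The paper closes this step not by inverting the discrete operator with elliptic regularity but by a third, direct energy estimate: testing \eqref{020} with $-\nabla_\tau\Delta e^l$ and summing, the positivity \eqref{EQ_bpositive} applied to $w_j=\nabla_\tau\nabla e^j$ disposes of $(\mathcal{D}_2\nabla e^l,\nabla_\tau\nabla e^l)$, the identity $2a(a-b)=a^2-b^2+(a-b)^2$ telescopes $\|\Delta e^k\|^2$ and yields $\sum_l\|\nabla_\tau\Delta e^l\|^2$ on the left, and Young's inequality leaves the \emph{unweighted} residual sum $\sum_{l\le k}\|R_t^l+R_f^l+E_1^l\|^2$ on the right. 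This is where \Ass{2} genuinely enters: $\sum_l\|E_1^l\|^2\lesssim k\,\tau^4\le\hat{C}^2\tau^2$, giving at once $\|\Delta e^k\|\le C_7\tau$ (hence \eqref{024} for $\tau\le\tau^{**}$) and $\sum_l\|\nabla_\tau\Delta e^l\|^2\le C_7^2\tau^2$, from which \eqref{023} follows by Cauchy--Schwarz and \Ass{2}. No lower bound on the $\tau_n$ is ever needed. Replacing your elliptic-regularity paragraph by this energy estimate would make your argument complete.
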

\begin{proof}
Set
\begin{align}\tau^{**}=\min\{1, 1/(8C_5),1/C_8\},\label{EQ_tau**}\end{align}
where $C_5,C_8$ will be determined later. Noting that, at each time level, (\ref{018}) is a linear elliptic problem, it is easy to obtain the existence and uniqueness of solution $U^n$. We here use the mathematical induction to prove  (\ref{022}) and (\ref{024}), which obviously hold for the initial level $n=0$. Assume   (\ref{022}) and (\ref{024}) hold for $0\leq n\leq k-1\;(k\leq N)$. Based on the boundedness of $\|U^{n-1}\|_{L^\infty}$ and $\|u^{n-1}\|_{L^\infty}$ for $1\leq n\leq k$, we have
\begin{align}
\|E_1^n\|&= \|f(u^{n-1})+f'(u^{n-1})\nabla_\tau u^n-(f(U^{n-1})+f'(U^{n-1})\nabla_\tau U^n)\|\nonumber\\
&\leq \|f(u^{n-1})-f(U^{n-1})\|+\|(f'(u^{n-1})-f'(U^{n-1}))u^n\| +\|f'(U^{n-1})(u^n-U^n)\|\nonumber\\
&\quad+\|(f'(u^{n-1})-f'(U^{n-1}))u^{n-1}\| +\|f'(U^{n-1})(u^{n-1}-U^{n-1})\|\nonumber\\
&\leq C_5(\|e^{n-1}\|+\|e^{n}\|), 
\label{027}
\end{align}
where
$C_5=2\sup_{\lvert v\rvert\leq C_\Omega (\mathcal{M}+1)}\lvert f'(v)\rvert + 2C_\Omega \mathcal{M}\sup_{\lvert v \rvert\leq C_\Omega (\mathcal{M}+1)}\lvert f''(v)\rvert .$

We now prove that (\ref{022}) and (\ref{024}) hold at $n=k$. Set  $n=j$ in \eqref{020}.  Multiplying  $\theta_{l-j}^{(l)}$  to both sides of \eqref{020}, and summing the resulting from 1 to $l$, we have
 \begin{align}\label{029}
\nabla_\tau e^l=\sum_{j=1}^l\theta_{l-j}^{(l)}(\Delta e^j +E_1^j) + \sum_{j=1}^l\theta_{l-j}^{(l)}(R_t^j+R_f^j),
\end{align}
where the property of DOC kernels \eqref{006} is used. Then taking the inner product with $e^l$ on both sides of  (\ref{029}), and summing the resulting equality from 1 to $k$, one has
\begin{equation}\label{030}
\begin{split}
\sum_{l=1}^{k}(\nabla_\tau e^l,e^l)&=\sum_{l=1}^{k}\sum_{j=1}^l\theta_{l-j}^{(l)}(\Delta e^j +E_1^j,e^l) + \sum_{l=1}^{k}\sum_{j=1}^l\theta_{l-j}^{(l)}(R_t^j+R_f^j,e^l).\\
\end{split}
\end{equation}
{Applying \eqref{EQ_bpositive},  integration by parts and  the inequality $2(a-b)a\geq a^2-b^2$ to \eqref{030}}, we have
\begin{align*}
\|e^k\|^2-\|e^{0}\|^2&\leq2\sum_{l=1}^{k}\sum_{j=1}^l\theta_{l-j}^{(l)}(E_1^j,e^l)+2\sum_{l=1}^{k}\sum_{j=1}^l\theta_{l-j}^{(l)}(R_t^j+R_f^j,e^l)\\
&\leq 2\sum_{l=1}^{k}\|e^l\|\sum_{j=1}^l\theta_{l-j}^{(l)}\|E_1^j\|+2\sum_{l=1}^{k}\|e^l\|\|\sum_{j=1}^l\theta_{l-j}^{(l)}(R_t^j+R_f^j)\|,
\end{align*}
which together with  the inequality \eqref{027} and $\|e^0\|=0$ produces
\begin{equation}\label{031}
\|e^k\|^2
\leq 2C_5\sum_{l=1}^{k}\|e^l\|\sum_{j=1}^l\theta_{l-j}^{(l)}(\|e^j\|+\|e^{j-1}\|)+2\sum_{l=1}^{k}\|e^l\|\|\sum_{j=1}^l\theta_{l-j}^{(l)}(R_t^j+R_f^j)\|.
\end{equation}
Choosing an integer $k^*(0\leq k^*\leq k)$ such that $\|e^{k^*}\|=\max_{0\leq i\leq k}\|e^{i}\|$, then the inequality \eqref{031} yields
\begin{align}
\|e^{k}\|\|e^{k^*}\|\leq\|e^{k^*}\|^2&\leq 4C_5\|e^{k^*}\|\sum_{l=1}^{k^*}\tau_l\|e^l\|+2\|e^{k^*}\|\sum_{l=1}^{k^*}\|\sum_{j=1}^l\theta_{l-j}^{(l)}(R_t^j+R_f^j)\| \nonumber\\
&\leq 4C_5\|e^{k^*}\|\sum_{l=1}^{k}\tau_l\|e^l\|+2\|e^{k^*}\|\sum_{l=1}^{k}\|\sum_{j=1}^l\theta_{l-j}^{(l)}(R_t^j+R_f^j)\|. \label{032}
\end{align}
{where \eqref{011} is used.}
Thus, we arrive at
\begin{equation}\label{033}
\begin{split}
\|e^{k}\|&\leq
4C_5\sum_{l=1}^{k}\tau_l\|e^l\|+2\sum_{l=1}^{k}\|\sum_{j=1}^l\theta_{l-j}^{(l)}(R_t^j+R_f^j)\|.
\end{split}
\end{equation}
It follows from  \eqref{EQ_tau**}  that
\begin{equation}\label{034}
\begin{split}
\|e^{k}\|&\leq
8C_5\sum_{l=1}^{k-1}\tau_l\|e^l\|+4\sum_{l=1}^{k}\|\sum_{j=1}^l\theta_{l-j}^{(l)}(R_t^j+R_f^j)\|.
\end{split}
\end{equation}
According to Proposition \ref{Proposition_Pestimate} and Lemmas \ref{lemma03}, \ref{lemma04}, \ref{lemma05} and \ref{lemma202147},  it holds
\begin{align}
\|e^{k}\|&\leq 4\exp(8C_5t_{k-1})\brab{\sum_{l=1}^{k}\|\sum_{j=1}^l\theta_{l-j}^{(l)}(R_t^j+R_f^j)\|}\nonumber\\
&\leq 4\exp(8C_5t_{k-1})\Big(\sum_{j=1}^kp^{(k)}_{k-j}\|R_f^j\|+\sum_{j=2}^kp^{(k)}_{k-j}\|R_t^j\|+p^{(k)}_{k-1}\|R_t^1\|\Big)\nonumber\\
&\leq 4\mathcal{M}\exp(8C_5T)\Big(C_f T+\frac32\mathcal{M}T+\mathcal{M}\Big)\tau^2 :=C_3\tau^2.\label{035}
\end{align}
Similarly, set $n=l$ in (\ref{020}) and  taking inner product with   $\nabla_{\tau} e^l$ on both sides of (\ref{020}), one has
\begin{align}
(\mathcal{D}_2 e^l,\nabla_{\tau} e^l)&=(\Delta e^l,\nabla_{\tau} e^l)+(R_t^l+R_f^l+E_1^l,\nabla_{\tau} e^l) \nonumber\\
&=-(\nabla e^l,\nabla_{\tau} \nabla e^l)+(R_t^l+R_f^l+E_1^l,\nabla_{\tau} e^l ). \label{036}
\end{align}
Summing $l$ from 1 to $k$ on \eqref{036}, using \eqref{EQ_bpositive} and the identity $2a(a-b)  = a^2 - b^2 + (a-b)^2$, we have
\begin{align} \label{ing}
\frac{\delta}{20}\sum_{l=1}^k\frac{\|\nabla_\tau e^l\|}{\tau_l}+\|\nabla e^k\|^2-\|\nabla e^0\|^2+\sum_{l=1}^k\|\nabla_\tau \nabla e^l\|^2\leq 2\sum_{l=1}^k (R_t^l+R_f^l+E_1^l,\nabla_{\tau} e^l ).
\end{align}
It follows from the Young's inequality that
\begin{align} \label{ing1}
2\sum_{l=1}^k (R_t^l+R_f^l+E_1^l,\nabla_{\tau} e^l )\leq \sum_{l=1}^k \Big(\frac{\delta}{20}\frac{\|\nabla_\tau e^l\|^2}{\tau_l}+\frac{20\tau_l}{\delta}\|R_t^l+R_f^l+E_1^l\|^2\Big).
\end{align}
Applying \eqref{ing1} and $\nabla e^0=0$ to \eqref{ing},  we arrive at
\begin{align}
\|\nabla e^k\|^2+\sum_{l=1}^k\|\nabla_\tau \nabla e^l\|^2&\leq \frac{20}{\delta}\sum_{l=1}^k\tau_l\|R_t^l+R_f^l+E_1^l\|^2 \nonumber\\
&\leq  \frac{60}{\delta}\Big(\tau_1\|R_t^1\|^2+\sum_{l=2}^k\tau_l\|R_t^l\|^2+\sum_{l=1}^k\tau_l(\|R_f^l\|^2+\|E_1^l\|^2)    \Big). \label{Ek}
\end{align}
Applying  the estimates  \eqref{027} and \eqref{035}, Lemmas \ref{lemma05} and  \ref{lemma202147} to  the inequality \eqref{Ek}, one yields
\begin{align}\label{035.1}
\|\nabla e^k\|^2+\sum_{l=1}^k\|\nabla_\tau \nabla e^l\|^2&\leq \frac{60}{\delta}\tau^3 \Big(\frac{\mathcal{M}^2}{4}+\frac94\mathcal{M}^2T\tau+TC_f^2\tau+2TC_5^2C_3^2\tau\Big)\nonumber\\
&\leq \frac{60}{\delta}\tau^3 \Big(\frac{\mathcal{M}^2}{4}+\frac94\mathcal{M}^2T+TC_f^2+2TC_5^2C_3^2\Big):=C_4^2\tau^3,
\end{align}
where $\tau\leq 1$ in \eqref{EQ_tau**}  is used.
Thus, the estimates \eqref{035.1}  and \eqref{035} yield the following $H^1$ norm estimate
\begin{align}
\|e^k\|_{H^1}\leq \sqrt{C_3^2+C_4^2}\tau^\frac32:=C_1\tau^\frac32.\label{EQ_H1e}
\end{align}

In the remainder, we will prove $\|U^k\|_{L^\infty}\leq C_\Omega (\mathcal{M}+1)$. To do so,  we need to estimate $ \|\Delta e^k\|$ due to the facts that $|e^k|_2\leq \tilde{C} \|\Delta e^k\|$ (here $|\cdot|_2$ denote the semi-norm of $H^2$-norm)  and
\begin{align}
\|U^k\|_{L^\infty}\leq  \|u^k\|_{L^\infty}+\|e^k\|_{L^\infty}\leq C_\Omega \mathcal{M}+C_\Omega \|e^k\|_{H^2},\label{0260}
\end{align}
where the embedding theorem is used in the last inequality. We now consider the  estimate of $\|\Delta e^k\|$ by taking inner product with $-\nabla_\tau\Delta e^l$ on both sides of \eqref{020} (set $n=l$), and have
\begin{align}\label{0363}
(\mathcal{D}_2\nabla e^l,\nabla_\tau\nabla e^l)+(\Delta e^l,\nabla_\tau\Delta e^l)=(R_t^l+R_f^l+E_1^l,-\nabla_\tau\Delta e^l).
\end{align}
Summing $l$ from 1 to $k$ on \eqref{0363}, using { $2(a-b)a= a^2-b^2+(a-b)^2$}, the positiveness \eqref{EQ_bpositive} and $\Delta e^0=0$, we have
\begin{equation}\label{In1}
{\sum_{l=1}^k \|\nabla_\tau \Delta e^l\|^2+}\|\Delta e^k\|^2\leq  2\sum_{l=1}^{k}\|R_t^l+R_f^l+E_1^l\|\|\nabla_\tau\Delta e^l\|,
\end{equation}
Applying the Young's inequality to \eqref{In1}, one has
\begin{align}
\|\Delta e^k\|^2&\leq  \sum_{l=1}^{k}\|R_t^l+R_f^l+E_1^l\|^2.\label{EQ_0110}
\end{align}
According to \eqref{027} and \eqref{035} and Lemmas \ref{lemma05}  and  \ref{lemma202147}, one has

\begin{align}
\sum_{l=1}^{k}\|R_t^l+R_f^l+E_1^l\|^2
&\leq  3\big(\|R_t^1\|^2+\sum_{l=2}^{k}\|R_t^l\|^2+\sum_{l=1}^{k}(\|R_f^l\|^2+\|E_1^l\|^2)\big)\nonumber\\
&\leq \big(\frac34\mathcal{M}^2+\frac{27}{4}\mathcal{M}^2t_k\tau+3C_f^2t_k\tau+3C_5^2C_3^2k\tau^2\big)\tau^2\nonumber\\
&\leq \big(\frac34\mathcal{M}^2+\frac{27}{4}\mathcal{M}T+3C_f^2T+3C_5^2C_3^2\hat{C}\big)\tau^2:= C_7^2\tau^2,
\label{EQ_deltae11}
\end{align}
where   one uses the maximum time-step assumption  \Ass{2} and \eqref{EQ_tau**} in the last inequality. Inserting \eqref{EQ_deltae11} into \eqref{EQ_0110}, we derive
\begin{align}
\|\Delta e^k\|\leq C_7\tau.\label{EQ_deltae}
\end{align}
Thus, from the estimates \eqref{EQ_deltae} and \eqref{EQ_H1e},  the condition \eqref{EQ_tau**} and the facts that $|e^k|_2\leq \tilde{C} \|\Delta e^k\|$,  we arrive at
\begin{align}\label{025}
\|e^k\|_{H^2}\leq C_8 \tau,
\end{align}
where $C_8=\sqrt{C_1^2+\tilde{C}^2C_7^2}$. Thus, combining \eqref{025}, \eqref{0260} and  \eqref{EQ_tau**}, one has
\begin{equation}\label{026}
\|U^k\|_{L^\infty}\leq  \|u^k\|_{L^\infty}+\|e^k\|_{L^\infty}
\leq C_\Omega \Big(\mathcal{M}+  C_8\tau\Big)\leq C_\Omega (\mathcal{M}+1) .
\end{equation}
Therefore,  (\ref{022}) and (\ref{024}) hold for $n=k$, i.e., the estimates \eqref{022} and \eqref{024} are proved.

The last claim \eqref{023} can be  proved  based on the result \eqref{026}.  With the help of Cauchy inequality,  from  \eqref{035} and \eqref{035.1} and the fact $|\nabla_\tau  e^l|_2\leq \tilde{C}\|\nabla_\tau \Delta e^l\|$, we have
\begin{align}\label{043}
\big(\sum_{l=1}^n\|\nabla_\tau e^l\|_{H^2}\big)^2 &\leq n \sum_{l=1}^n\|\nabla_\tau e^l\|_{H^2}^2\leq n\sum_{l=1}^n\|\nabla_\tau e^l\|^2+n\sum_{l=1}^n\|\nabla_\tau \nabla e^l\|^2+n\tilde{C}^2\sum_{l=1}^n{\|\nabla_\tau \Delta e^l\|^2}\nonumber\\
 &\leq 4C_3^2N^2\tau^4+NC_4^2\tau^3+N\tilde{C}^2\sum_{l=1}^n{\|\nabla_\tau \Delta e^l\|^2} \; .
\end{align}
We now estimate $\sum_{l=1}^n\|\nabla_\tau \Delta e^l\|^2$. {Applying the Young's inequality  $2ab\leq 8a^2+\frac12 b^2$ and inequality \eqref{EQ_deltae11} to \eqref{In1}, one has
\begin{align}\label{EQ_1630}
\sum_{l=1}^k \|\nabla_\tau \Delta e^k\|^2\leq  16\sum_{l=1}^{k}\|R_t^l+R_f^l+E_1^l\|^2\leq C_7^2\tau^2.
\end{align}}
Inserting \eqref{EQ_1630} into \eqref{043}, then it follows from the condition \Ass{2} and \eqref{EQ_tau**} that
\begin{align*}
\big(\sum_{l=1}^n\|\nabla_\tau e^l\|_{H^2}\big)^2\leq  4C_3^2\hat{C}^2+C_4^2\hat{C}+C_7^2\tilde{C}^2\hat{C}:=C_{2}^2.
\end{align*}
The proof is completed.
\end{proof}
\subsection{Analysis of the fully discrete scheme }
We now consider the boundedness of the fully discrete solution $U_h^n$, which plays the key role of the proof of Theorem \ref{theorem03}.  Define the Ritz projection operator $R_h: H^1_0(\Omega)\rightarrow V_h\subseteq H^1_0(\Omega)$ by
\begin{align}
(\nabla(v-R_hv),\nabla\omega)=0,\quad \forall \omega\in V_h.\label{EQ_or}
\end{align}
The Ritz projection $R_h$ has the following estimate (for example, see \cite[Lemma 1.1]{T06})
\begin{align}\label{045}
\|v-R_hv\|+h\|\nabla(v-R_hv)\|\leq C_\Omega h^s\|v\|_{H^s}, \quad \forall v\in H^s(\Omega)\cap H^1_0(\Omega), 1\leq s\leq r+1.
\end{align}
{Thanks to the boundedness of the semi-discrete  solutions  and   \eqref{EQ_1444}, we only need to  estimate $\|U^n-U_h^n\|_{L^\infty}$}, which can be split by the Ritz projection into the following two parts
\begin{align}\label{046}
U^n-U_h^n=U^n-R_hU^n+R_hU^n-U_h^n:=\eta^n+\xi^n,\quad n=0,1,\ldots,N.
\end{align}
For the projection error $\eta^n$, it follows from \eqref{045} that
\begin{align}
\|\eta^n\|\leq C_\Omega h^s\|U^n\|_{H^{s}},\quad 0\leq s\leq r+1.\label{EQ_02030218}
\end{align}
Thus, in remainder of this subsection,  the focus is on the estimate of the second term $\xi^n$. To do so,  we now consider the weak form of the semi-discrete equation (\ref{018}) given as
\begin{align}\label{047}
( \mathcal{D}_2 U^n,v_h)=-(\nabla U^n,\nabla v_h)+(f(U^{n-1})+f'(U^{n-1}) \nabla_\tau U^n,v_h), \quad \forall v_h\in H^1_0(\Omega).
\end{align}
Subtracting (\ref{008}) from (\ref{047}), one can use the orthogonality \eqref{EQ_or} to obtain
\begin{align}\label{048}
( \mathcal{D}_2 \xi^n,v_h)=-(\nabla \xi^n,\nabla v_h)-( \mathcal{D}_2 \eta^n,v_h)+(E_2^n,v_h),\quad\forall v_h\in V_h \, ,
\end{align}
where
\begin{align}\label{046}
E_2^n=f(U^{n-1})+f'(U^{n-1})\nabla_\tau U^n -\Big (f(U_h^{n-1})+f'(U_h^{n-1}) \nabla_\tau U_h^n \Big).
\end{align}
 Let $n=j$ in (\ref{048}).  Then,  {multiplying  $\theta_{l-j}^{(l)}$ by  (\ref{048})}, summing $j$ from 1 to $l$, and using the property \eqref{006}, one has
 \begin{align}\label{050}
 (\nabla_\tau \xi^l,v_h)=-(\sum_{j=1}^l\theta_{l-j}^{(l)}\nabla \xi^j,\nabla v_h)-(\nabla_\tau \eta^l,v_h)+(\sum_{j=1}^l\theta_{l-j}^{(l)}E_2^j,v_h) .
\end{align}
\begin{theorem}\label{theorem02}
Assume the semi-discrete scheme (\ref{018}) has a unique solution $U^n\; (n=1,\dots,N)$. Then the finite element system defined in (\ref{008}) has a unique solution $U_h^n \; (n=1,\dots,N)$, and there exist $\tau^{***}=\min\{\tau^{**},1/(8C_{11})\}$ and $h^{**}=(C_{10}C_\Omega)^{-\frac{2}{4-d}}$ such that when $\tau<\tau^{***}$ and $h<h^{**}$, it holds
\begin{align}
&\|\xi^n\| \leq C_{10} h^2,\label{051}\\
&\|U_h^n\|_{L^\infty} \leq  Q,\label{052}
\end{align}
where $Q=\bar{C}(\mathcal{M}+1)+1$, $C_{10},C_{11},C_\Omega$ are  constants independent of $\tau$.
\end{theorem}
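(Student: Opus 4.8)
The plan is to mirror the structure of Theorem \ref{theorem01}, running a mathematical induction on the time level $n$ where the inductive hypothesis carries both the $L^2$ bound \eqref{051} on $\xi^n$ and the $L^\infty$ bound \eqref{052} on $U_h^n$. The bounds hold trivially at $n=0$ since $U_h^0=R_hu_0$ and the projection error is $\mathcal{O}(h^{r+1})$. Assume both hold for $0\le n\le k-1$; the goal is to establish them at $n=k$. First I would use the inductive $L^\infty$ bound on $U_h^{n-1}$ together with the semi-discrete $L^\infty$ bound $\|U^{n-1}\|_{L^\infty}\le C_\Omega(\mathcal{M}+1)$ from \eqref{024} to estimate the linearized nonlinear term $E_2^n$. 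Writing $E_2^n$ as a telescoping difference of the $f$ and $f'$ evaluations, adding and subtracting intermediate terms exactly as in \eqref{027}, and using that $f,f'$ are locally Lipschitz on the bounded range $|v|\le Q$, I expect a bound of the form
\begin{align*}
\|E_2^n\|\le C_{11}\brab{\|\xi^{n-1}\|+\|\xi^n\|}+C\brab{\|\eta^{n-1}\|+\|\eta^n\|+\|\nabla_\tau\eta^n\|}.
\end{align*}
The projection-error pieces contribute $\mathcal{O}(h^{r+1})$ via \eqref{EQ_02030218}, and since $r\ge1$ these are dominated by $h^2$.

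Next I would test \eqref{050} with $v_h=\xi^l$ and sum $l$ from $1$ to $k$. The term $-(\sum_j\theta_{l-j}^{(l)}\nabla\xi^j,\nabla\xi^l)$ is handled by Corollary \ref{corollary01}, which guarantees its sum over $l$ is nonnegative and can be dropped to the correct side; this is precisely the DOC-kernel positivity that replaces the delicate $G$-stability estimates. Together with $2(\nabla_\tau\xi^l,\xi^l)\ge\|\xi^l\|^2-\|\xi^{l-1}\|^2$ and $\xi^0=\mathcal{O}(h^{r+1})$, this yields an inequality of the shape
\begin{align*}
\|\xi^k\|\le 4C_{11}\sum_{l=1}^k\tau_l\|\xi^l\|+2\sum_{l=1}^k\mynormb{\sum_{j=1}^l\theta_{l-j}^{(l)}\brab{-\nabla_\tau\eta^j+E_2^j}}+\mathcal{O}(h^{r+1}),
\end{align*}
after selecting a maximal index $k^*$ as in \eqref{032}. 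The DCC/DOC machinery (Proposition \ref{Proposition_Pestimate} and Lemma \ref{lemma03}, giving $\sum_j\theta_{l-j}^{(l)}=\tau_l$ and $p_{n-j}^{(n)}\le2\tau$) converts the kernel-weighted consistency sums into clean $\mathcal{O}(h^{r+1})$ contributions. The projection-difference $\sum_j\theta_{l-j}^{(l)}\nabla_\tau\eta^j$ telescopes against the DOC property \eqref{006} so that its contribution is controlled by $\sum_l\|\eta^l\|\le t_k\cdot\mathcal{O}(h^{r+1})$-type terms; bounding $\|U^l\|_{H^{r+1}}$ here requires the regularity \eqref{EQ_uregularity } combined with $\|U^l\|_{H^{r+1}}\le\|u^l\|_{H^{r+1}}+\|e^l\|_{H^{r+1}}$, which is why the semi-discrete estimates of Theorem \ref{theorem01} are needed as an input.

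After absorbing $\|\xi^k\|$ from the right via the smallness $\tau<\tau^{***}=\min\{\tau^{**},1/(8C_{11})\}$, I would apply the discrete Gr\"onwall inequality (Lemma \ref{lemma04}) to obtain $\|\xi^k\|\le C_{10}h^2$, closing the first half of the induction. To close the $L^\infty$ bound \eqref{052}, I would invoke the inverse-inequality strategy displayed in \eqref{EQ_1444}: write $\|U_h^k\|_{L^\infty}\le\|R_hU^k\|_{L^\infty}+\|\xi^k\|_{L^\infty}\le C\|U^k\|_{H^2}+Ch^{-d/2}\|\xi^k\|$, then use \eqref{024} and the just-proved $\|\xi^k\|\le C_{10}h^2$ to get $\|U_h^k\|_{L^\infty}\le\bar C(\mathcal{M}+1)+C_{10}C_\Omega h^{(4-d)/2}$. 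The mesh restriction $h<h^{**}=(C_{10}C_\Omega)^{-2/(4-d)}$ forces the second term below $1$, yielding $\|U_h^k\|_{L^\infty}\le Q$. The main obstacle I anticipate is the $E_2^n$ estimate: one must be careful that the Lipschitz constants $C_{11}$ are taken over the a priori range $|v|\le Q$ (the target bound, not a smaller one), so that the argument is genuinely self-consistent under the induction, and that the $\nabla_\tau U_h^n$ factor inside $E_2^n$ does not introduce an uncontrolled difference quotient — this is managed by splitting it against $\nabla_\tau U^n$ and absorbing $\nabla_\tau\xi^n$ either into the Gr\"onwall sum or through the telescoping $\nabla_\tau\eta$ term rather than estimating it pointwise.
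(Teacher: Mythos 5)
Your proposal follows essentially the same route as the paper: induction carrying \eqref{051}--\eqref{052} simultaneously, the Lipschitz estimate for $E_2^n$ with constants taken over the a priori range $\lvert v\rvert\le Q$, testing the DOC-convolved equation \eqref{050} with $v_h=\xi^l$, dropping the gradient term by Corollary \ref{corollary01}, the maximal-index trick, Gr\"onwall with $\tau<1/(8C_{11})$, and the inverse inequality with $h<h^{**}=(C_{10}C_\Omega)^{-2/(4-d)}$ to close the $L^\infty$ bound. However, one step is wrong as written: you propose to control the projection errors $\eta^l$, $\nabla_\tau\eta^l$ at order $h^{r+1}$ via $\|U^l\|_{H^{r+1}}\le\|u^l\|_{H^{r+1}}+\|e^l\|_{H^{r+1}}$, but no bound on $\|e^l\|_{H^{r+1}}$ is available anywhere --- Theorem \ref{theorem01} controls the semi-discrete error only in $H^2$. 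This is precisely why the theorem asserts $\|\xi^n\|\le C_{10}h^2$ rather than $C_{10}h^{r+1}$: the paper takes $s=2$ in \eqref{EQ_02030218} throughout, bounding $\|\nabla_\tau\eta^l\|\le C_\Omega h^2\|\nabla_\tau U^l\|_{H^2}$ and then, crucially, summing via $\sum_{l=1}^k\|\nabla_\tau e^l\|_{H^2}\le C_2$, i.e.\ estimate \eqref{023} --- the summed, not pointwise, $H^2$ bound is the exact input from Theorem \ref{theorem01} that makes $\sum_l\|\nabla_\tau\eta^l\|=\mathcal{O}(h^2)$ uniformly in $k$. You correctly sense that Theorem \ref{theorem01} must be invoked here, but you cite the wrong norm; since your final target is $C_{10}h^2$ anyway, the repair is simply to replace every $H^{r+1}$/$h^{r+1}$ claim for $U^l$ and $\eta^l$ by the $H^2$/$h^2$ version. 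Two smaller points: the DOC kernels act on $\mathcal{D}_2\eta^j$ (yielding $\nabla_\tau\eta^l$ by \eqref{006}), so there is no separate sum $\sum_j\theta_{l-j}^{(l)}\nabla_\tau\eta^j$ to telescope; and you do not address the unique solvability of \eqref{008} asserted in the statement (the paper disposes of it by noting the coefficient matrix is diagonally dominant for small $\tau$).
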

\begin{proof}
%
It is obvious that the solution $U_h^n$ of (\ref{008}) uniquely exists   since  the coefficients matrice of (\ref{008}) is diagonally dominant. Similar to the proof of Theorem \ref{theorem01}, the mathematical induction is also used to  prove (\ref{051}) and \eqref{052}.  At the beginning, it is easy to check $\xi^0+\eta^0=U^0-U^0_h=0$  by \eqref{046},  and the inequality   (\ref{051}) also holds for $n=0$ according to  \eqref{045}. Assume (\ref{051}) holds for all $n\leq k-1$. It is known that $\|R_h v\|_{L^\infty}\leq \bar{C}\|v\|_{H^2}$ for  any $v\in H^2(\Omega)$. Thus, from Theorem \ref{theorem01}, the projection $R_h U^n$ is bounded in the sense of $L^\infty$-norm, namely
\begin{align*}
\|R_h U^n\|_{L^\infty}\leq \bar{C}(\mathcal{M}+1).
\end{align*}
Together with \eqref{EQ_02030218}, it holds for $n\leq k-1$ and $h\leq h^{**}=(C_{10}C_\Omega)^{-\frac{2}{4-d}}$ that
\begin{equation}\label{053}
\begin{split}
\|U_h^{n}\|_{L^\infty}&\leq \;\|R_hU^{n}\|_{L^\infty}+\|\xi^{n}\|_{L^\infty} \; \leq  \;\|R_hU^{n}\|_{L^\infty}+C_\Omega h^{-\frac{d}{2}}\|\xi^{n}\|\\
&\leq \; \|R_hU^{n}\|_{L^\infty}+C_\Omega C_{10}h^{-\frac{d}{2}}h^2 \; \leq \; \|R_hU^{n}\|_{L^\infty}+1 \; \leq Q.
\end{split}
\end{equation}
Due to the boundedness of $||U^{k-1}||_{L^\infty}$ and $||U_h^{k-1}||_{L^\infty}$, one has
\begin{align}
\|E_2^k\|&\leq \|f(U^{k-1})+f'(U^{k-1})\nabla_\tau U^k-(f(U_h^{k-1})+f'(U_h^{k-1})\nabla_\tau U_h^k)\|\nonumber\\
&\leq \|f(U^{k-1})-f(U_h^{k-1})\|+\|(f'(U^{k-1})-f'(U_h^{k-1}))U^k\| +\|f'(U_h^{k-1})(U^k-U_h^k)\| \nonumber\\
&\quad+\|(f'(U^{k-1})-f'(U_h^{k-1}))U^{k-1}\| +\|f'(U_h^{k-1})(U^{k-1}-U_h^{k-1})\|\nonumber\\
&\leq C_{11}(\|U^{k-1}-U_h^{k-1}\|+\|U^k-U_h^k\|)\nonumber\\
&\leq C_{11}(\|\xi^{k-1}\|+\|\xi^k\|+\|\eta^{k-1}\|+\|\eta^k\|)\nonumber\\
&\leq C_{11}\Big(\|\xi^{k-1}\|+\|\xi^k\|+2C_\Omega (\mathcal{M}+1)h^2\Big),\label{054}
\end{align}
where one uses \eqref{EQ_02030218} in the last inequality above, and
$$C_{11}:=3\sup_{\lvert v \rvert\leq \max{(C_\Omega (\mathcal{M}+1),Q)}}\lvert f' (v)\rvert + 2C_\Omega (\mathcal{M}+1)\sup_{\lvert v \rvert\leq \max{(C_\Omega (\mathcal{M}+1),Q)}}\lvert f'' (v)\rvert .$$
Taking  $v_h=\xi^l$ in \eqref{050}  and summing $l$ from $1$ to $k$, one has
\begin{align}
\sum_{l=1}^k(\nabla_\tau \xi^l,\xi^l)&=-\sum_{l=1}^k(\sum_{j=1}^l\theta_{l-j}^{(l)}\nabla \xi^j,\nabla\xi^l)-\sum_{l=1}^k(\nabla_\tau \eta^j,\xi^l)+\sum_{l=1}^k(\sum_{j=1}^l\theta_{l-j}^{(l)}E_2^j,\xi^l) \nonumber\\
&\leq \sum_{l=1}^k\|\xi^l\|\|\nabla_\tau \eta^l\|+\sum_{l=1}^k\|\xi^l\|\sum_{j=1}^l\theta_{l-j}^{(l)}\|E_2^j\|,\label{055}
\end{align}
where the last inequality uses the positive definiteness of \eqref{EQ_bpositive}.
Inserting \eqref{054} into \eqref{055}, one has
\begin{align*}
\|\xi^k\|^2 \leq\|\xi^0\|^2+ 2\sum_{l=1}^k\|\xi^l\|\|\nabla_\tau \eta^l\|
+2C_{11}\sum_{l=1}^k\|\xi^l\|\sum_{j=1}^l\theta_{l-j}^{(l)}\Big(\|\xi^{j-1}\|+\|\xi^{j}\|+2C_\Omega (\mathcal{M}+1) h^2\Big).
\end{align*}
Choosing  $k^*(0\leq k^*\leq k)$ such that $\|\xi^{k^*}\|=\max_{0\leq i\leq k}\|\xi^{i}\|$. Then the above inequality {combining with \eqref{011}} yield
\begin{align*}
\|\xi^{k^*}\|^2&\leq \|\xi^0\|\|\xi^{k^*}\|+2\|\xi^{k^*}\|\sum_{l=1}^{k^*}\|\nabla_\tau \eta^l\|
+4C_{11}\|\xi^{k^*}\|\Big(C_\Omega (\mathcal{M}+1)Th^2+\sum_{l=1}^{k^*}\tau_l\|\xi^l\| \Big).
\end{align*}
Thus, one further has
\begin{align*}
\|\xi^{k}\|\leq \|\xi^0\|+ 2\sum_{l=1}^k\|\nabla_\tau \eta^l\|
+4C_{11}\Big(C_\Omega (\mathcal{M}+1)Th^2+\sum_{l=1}^k\tau_l\|\xi^l\|\Big).
\end{align*}
According to { \eqref{EQ_02030218}} and \eqref{023}, we find
\begin{align}
\|\xi^{k}\|&\leq C_\Omega h^2\| u^0\|_{H^2}+ 2 C_\Omega h^2\sum_{l=1}^k\|\nabla_\tau U^l\|_{H^2}+4C_{11}\Big(C_\Omega (\mathcal{M}+1)Th^2+\sum_{l=1}^k\tau_l\|\xi^l\|\Big)\nonumber\\
&{\leq (\mathcal{M}+2 \sum_{l=1}^k(\|\nabla_\tau u^l\|_{H^2}+\|\nabla_\tau e^l\|_{H^2}))C_\Omega h^2+4C_{11} \Big((\mathcal{M}+1)TC_\Omega h^2+\sum_{l=1}^k\tau_l\|\xi^l\|\Big)}\label{EQ_1857}\\
&\leq \Big(\mathcal{M}+2 ( \mathcal{M}T+C_{2}+4C_{11} (\mathcal{M}+1)T)\Big)C_\Omega h^2+4C_{11}\sum_{l=1}^k\tau_l\|\xi^l\|\nonumber \\
&:= C_{12} h^2+4C_{11}\sum_{l=1}^k\tau_l\|\xi^l\|,\nonumber
\end{align}
where $\|\xi^0\|=\|R_h U^0-U_h^0\|=\|R_h u^0-u^0\|\leq C_\Omega h^2\| u^0\|_{H^2}$ is used.
 Noting  $\tau\leq\tau^{***}\leq 1/(8C_{11})$, we have
$$
\|\xi^{k}\|\leq 2C_{12} h^2+8C_{11}\sum_{l=1}^{k-1}\tau_l\|\xi^l\|.
$$
Thus, from Lemma \ref{lemma04}, we arrive at
$$
\|\xi^{k}\|
\leq 2\exp(8C_{11}T)C_{12} h^2:=C_{10} h^2.
$$
Hence, for $h\leq h^{**}$, it holds
\begin{align}\label{061}
\|U_h^k\|_{L^\infty}&\leq \|R_hU^k\|_{L^\infty}+\|\xi^k\|_{L^\infty}\leq \|R_hU^k\|_{L^\infty}+C_\Omega h^{-\frac{d}{2}}\|\xi^k\|\leq Q.
\end{align}
Therefore, the estimates (\ref{051}) and (\ref{052}) hold for $n=k$ and the proof is completed.
\end{proof}
\section{Unconditionally optimal $L^2$-norm  error estimate}\label{Section_4}
We now present the unconditionally optimal $L^2$-norm error estimate for fully discrete scheme \eqref{008}.
\begin{theorem}\label{theorem03}
Assume $ u(\cdot,t)\in H^{r+1}(\Omega)\bigcap H_0^1(\Omega)$ for any $t\geq 0$ and $r\geq 1$, and the conditions \Ass{1} and \Ass{2} hold.
Then there exist constants $\tau^*$ and $h^*$ such that, when $\tau<\tau^*$ and $h<h^*$,  the $r$-degree finite element system defined in \eqref{008} owns a unique solution and
satisfies
\begin{align}\label{010}
\|u^n-U_h^n\| \leq C_0^* (\tau^2+h^{r+1}),
\end{align}
where $C_0^*$ is a positive constant independent of $h$ and $\tau$.
\end{theorem}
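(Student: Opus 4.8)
The plan is to compare the exact solution $u^n$ directly with the fully discrete solution $U_h^n$ through a single error equation, using the Ritz projection of the \emph{exact} solution as the intermediate quantity. This is essential: the spatial error must be governed by the genuine $H^{r+1}$-regularity of $u$ in \eqref{EQ_uregularity } rather than by the semi-discrete solution $U^n$, for which only the $H^2$-bound \eqref{024} is available (so the triangle route through $R_hU^n$ would stall at $h^2$, as in Theorem \ref{theorem02}). Existence and uniqueness of $U_h^n$ are inherited from the diagonal dominance argument of Theorem \ref{theorem02}. I would then write $u^n-U_h^n=\rho^n+\zeta^n$ with $\rho^n:=u^n-R_hu^n$ and $\zeta^n:=R_hu^n-U_h^n$. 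The projection part is immediate from \eqref{045}: $\|\rho^n\|\le C_\Omega\mathcal{M}h^{r+1}$, and likewise $\|\nabla_\tau\rho^l\|\le C_\Omega h^{r+1}\|\nabla_\tau u^l\|_{H^{r+1}}\le C_\Omega\mathcal{M}\tau_l h^{r+1}$, whence $\sum_l\|\nabla_\tau\rho^l\|\le C_\Omega\mathcal{M}T\,h^{r+1}$. This is exactly where the optimal order $h^{r+1}$ enters.

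Next I would derive the error equation for $\zeta^n$. Testing the weak form of the exact equation against $v_h\in V_h$, subtracting \eqref{008}, and using the Ritz orthogonality \eqref{EQ_or} to annihilate $(\nabla\rho^n,\nabla v_h)$, I obtain $(\mathcal{D}_2\zeta^n,v_h)=-(\nabla\zeta^n,\nabla v_h)-(\mathcal{D}_2\rho^n,v_h)+(E^n,v_h)+(R_t^n+R_f^n,v_h)$, where $E^n:=f(u^{n-1})+f'(u^{n-1})\nabla_\tau u^n-\big(f(U_h^{n-1})+f'(U_h^{n-1})\nabla_\tau U_h^n\big)$ and $R_t^n,R_f^n$ are the consistency errors of Lemmas \ref{lemma202147} and \ref{lemma05}. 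As in \eqref{050}, convolving with the DOC kernels $\theta_{l-j}^{(l)}$ and using \eqref{006} turns the left-hand side into $(\nabla_\tau\zeta^l,v_h)$ and the $\mathcal{D}_2\rho$ term into $(\nabla_\tau\rho^l,v_h)$. Taking $v_h=\zeta^l$ and summing $l=1,\dots,k$, I would apply $2(a-b)a\ge a^2-b^2$ to the $\nabla_\tau\zeta^l$ term and the positive semi-definiteness of the DOC kernels (Corollary \ref{corollary01}, applied componentwise to $\nabla\zeta^l$) to discard the diffusion term, leaving $\|\zeta^k\|^2$ controlled by the three forcing contributions.

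The forcing terms are then estimated in the spirit of Theorems \ref{theorem01} and \ref{theorem02}. The nonlinear term is handled by writing each $\nabla_\tau$ as a difference of nodal values, so that $\nabla_\tau\zeta$ never appears; using $\|u^{n-1}\|_{L^\infty}\le C_\Omega\mathcal{M}$ and $\|U_h^{n-1}\|_{L^\infty}\le Q$ (from Theorem \ref{theorem02}) together with $f\in C^2$, this yields $\|E^n\|\le C\big(\|\zeta^{n-1}\|+\|\zeta^n\|+h^{r+1}\big)$. Choosing $k^*$ with $\|\zeta^{k^*}\|=\max_{0\le i\le k}\|\zeta^i\|$, bounding the inner convolution of $E^j$ by $\sum_j\theta_{l-j}^{(l)}=\tau_l$ (Lemma \ref{lemma03}), and swapping the order of summation so that $\sum_{l\ge j}\theta_{l-j}^{(l)}=p^{(k)}_{k-j}\le 2\tau$ (Proposition \ref{Proposition_Pestimate}) acts on the consistency errors, I would obtain $\|\zeta^{k}\|\le C h^{r+1}+C\tau^2+C_{11}\sum_{l}\tau_l\|\zeta^l\|$. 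Here the $\tau^2$ arises from $\|R_f^j\|\le C_f\tau_j^2$ and $\|R_t^j\|\le\tfrac32\mathcal{M}\tau_j\tau$ for $j\ge2$, while the first-level term $p^{(k)}_{k-1}\|R_t^1\|\le 2\tau\cdot\tfrac{\mathcal{M}}2\tau_1=\mathcal{O}(\tau^2)$ shows the BDF1 start does not degrade second-order accuracy. Absorbing the $l=k$ term via $\tau\le\tau^*\le 1/(8C_{11})$ and invoking the discrete Gr\"onwall inequality (Lemma \ref{lemma04}) gives $\|\zeta^n\|\le C(\tau^2+h^{r+1})$; combined with the bound on $\|\rho^n\|$ this is \eqref{010}.

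The main obstacle, and the reason Theorems \ref{theorem01} and \ref{theorem02} are indispensable, is closing the nonlinear estimate: it simultaneously requires the uniform $L^\infty$-control of both $u^n$ and $U_h^n$ (the latter resting on the semi-discrete $H^2$-boundedness of Theorem \ref{theorem01}, itself secured through \Ass{2}), the exact-solution projection estimate to reach $h^{r+1}$ rather than $h^2$, and the positivity and boundedness of the DOC and DCC kernels, which hold precisely under \Ass{1}. Keeping $\nabla_\tau\zeta$ out of the nonlinear bound is the key technical device that keeps the Gr\"onwall argument closed.
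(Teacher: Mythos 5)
Your proposal follows essentially the same route as the paper's proof: the same splitting $u^n-U_h^n=(u^n-R_hu^n)+(R_hu^n-U_h^n)$ through the Ritz projection of the \emph{exact} solution, the same DOC-kernel convolution of the error equation with $v_h=\zeta^l$, the same expansion of the linearized nonlinearity into nodal differences bounded via the $L^\infty$-bounds from Theorems \ref{theorem01} and \ref{theorem02}, the same summation exchange to DCC kernels with $p^{(n)}_{n-j}\le 2\tau$ for the consistency terms (including the separate $\mathcal{O}(\tau^2)$ treatment of the BDF1 start), and the same maximizer-plus-Gr\"onwall closure under $\tau\le 1/(8C)$. The argument is correct, and your structural remarks (why routing through $R_hU^n$ would stall at $h^2$, and where \Ass{1} and \Ass{2} enter) accurately reflect the paper's reasoning.
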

\begin{proof}
The error of fully discrete solution and exact solution can be split into the following two parts
\begin{align}
\|u^n-U_h^n\|\leq \|u^n-R_hu^n\|+\|R_hu^n-U_h^n\|:=\|\vartheta^n\|+\|\zeta^n\|.\label{EQ_2103252033}
\end{align}
Note that the projection error $\vartheta^n$  can be immediately  estimated by \eqref{045} as follows
\begin{align}
\|\vartheta^n\|\leq C_\Omega h^s\|u^n\|_{H^{s}},\quad 0\leq s\leq r+1.\label{EQ_02031547}
\end{align}
Thus, we only need to consider $\zeta^n$.
Subtracting \eqref{008} from \eqref{001}, we get the error equation of $\zeta^n$
\begin{align}\label{062}
(\mathcal{D}_2 \zeta^n,v_h)=-(\nabla \zeta^n,\nabla v_h)-(\mathcal{D}_2 \vartheta^n,v_h)+(E_3^n,v_h)+(R_t^n+R_f^n,v_h),\quad \forall v_h\in V_h,
\end{align}
where $E_3^n=f(u^{n-1})+f'(u^{n-1}) \nabla_\tau u^n-(f(U_h^{n-1})+f'(U_h^{n-1}) \nabla_\tau U_h^n)$.\par
Let $n=j$ in \eqref{062}. Multiplying  (\ref{062}) by DOC kernels $\theta_{l-j}^{(l)}$ and summing $j$ from 1 to $l$, one has
 \begin{align}\label{062.2}
(\nabla_\tau \zeta^l,v_h)=-(\sum_{j=1}^l\theta_{l-j}^{(l)}\nabla \zeta^j,\nabla v_h)-(\nabla_\tau \vartheta^l,v_h)+(\sum_{j=1}^l\theta_{l-j}^{(l)}E_3^j,v_h) +(\sum_{j=1}^l\theta_{l-j}^{(l)}(R_t^j+R_f^j),v_h),
\end{align}
where the property \eqref{006}  is used.
Taking $v_h=\zeta^l$ in (\ref{062.2}), summing $l$ from $1$ to $n$ and using the inequality \eqref{045} and Corollary \ref{corollary01}, one has
\begin{equation}\label{062.3}
\begin{split}
\sum_{l=1}^n(\nabla_\tau \zeta^l,\zeta^l) &=-\sum_{l=1}^n(\sum_{j=1}^l\theta_{l-j}^{(l)}\nabla \zeta^j,\nabla \zeta^l)-\sum_{l=1}^n(\nabla_\tau \vartheta^l,\zeta^l)+\sum_{l=1}^n(\sum_{j=1}^l\theta_{l-j}^{(l)}(E_3^j+R_t^j+R_f^j),\zeta^l)\\
&\leq \sum_{l=1}^n\|\nabla_\tau \vartheta^l\|\|\zeta^l\|+\sum_{l=1}^n\sum_{j=1}^l\theta_{l-j}^{(l)}(\|E_3^j\|+\|R_f^j\|+\|R_t^j\|)\|\zeta^l\|.
\end{split}
\end{equation}
Noting $$\|\nabla_\tau \vartheta^l\|\leq  C_\Omega \|\nabla_\tau u^l\|_{H^{r+1}}h^{r+1}\leq C_\Omega h^{r+1} \|\int_{t_{l-1}}^{t_l} \partial_t u(s)\zd s\|_{H^{r+1}}\leq C_\Omega \mathcal{M}\tau_l h^{r+1},$$
together with the inequality $2(a-b)a\geq a^2-b^2$, we arrive at
\begin{align}
\|\zeta^n\|^2&\leq \|\zeta^0\|^2+2\sum_{l=1}^n\sum_{j=1}^l\theta_{l-j}^{(l)}(\|E_3^j\|+\|R_f^j\|+\|R_t^j\|)\|\zeta^l\|+2C_\Omega \mathcal{M}h^{r+1}\sum_{l=1}^n\tau_l  \|\zeta^l\|.\label{EQ_02031706}
\end{align}
Thanks to the boundedness of $\|U^n_h\|_{L^\infty}$ in \eqref{052},  the nonlinear term $E_3^n$ can be estimated by
\begin{align}
\|E_3^n\|&=\|f(u^{n-1})+f'(u^{n-1}) \nabla_\tau u^n-(f(U_h^{n-1})+f'(U_h^{n-1}) \nabla_\tau U_h^n)\|\nonumber\\
&\leq\|f(u^{n-1})-f(U_h^{n-1})\|+\|(f'(u^{n-1})-f'(U_h^{n-1}))u^n\|+\|f'(U_h^{n-1})(u^n-U_h^n)\|\nonumber\\
&\quad+\|(f'(u^{n-1})-f'(U_h^{n-1}))u^{n-1}\| +\|f'(U_h^{n-1})(u^{n-1}-U_h^{n-1})\|\nonumber\\
&\leq C_{13}\Big(\|u^{n-1}-U_h^{n-1}\|+\|u^{n}-U_h^{n}\|\Big)\nonumber\\
&\leq C_{13}\Big(\|\zeta^{n-1}\|+\|\zeta^{n}\|+\|\vartheta^{n-1}\|+\|\vartheta^{n}\|\Big)\nonumber\\
&\leq C_{13}\Big(\|\zeta^{n-1}\|+\|\zeta^{n}\|+2C_\Omega \mathcal{M}  h^{r+1}\Big),\label{063}
\end{align}
where the last inequality holds by \eqref{045} and
$$C_{13}=2\sup_{|v|<\max\{C_\Omega \mathcal{M},Q\}}|f'(v)|+2\sup_{|v|<\max\{C_\Omega \mathcal{M},Q\}}|f''(v)|C_\Omega \mathcal{M}.$$
  Inserting \eqref{063} into \eqref{EQ_02031706},  one has
  \begin{align*}
\|\zeta^n\|^2&\leq \|\zeta^0\|^2+2 C_{13}\sum_{l=1}^n\sum_{j=1}^l\theta_{l-j}^{(l)}(\|\zeta^{j-1}\|+\|\zeta^{j}\|)\|\zeta^l\|\\
&+2\sum_{l=1}^n\sum_{j=1}^l\theta_{l-j}^{(l)}(\|R_f^j\|+\|R_t^j\|)\|\zeta^l\|
+2C_\Omega \mathcal{M}(1+2C_{13})h^{r+1}\sum_{l=1}^n\tau_l  \|\zeta^l\|.
\end{align*}
Choosing  $n^*$ such that  $\|\zeta^{n^*}\|=\max_{0\leq l\leq n}\|\zeta^l\|$. The above inequality yields
\begin{align*}
\|\zeta^{n^*}\|^2&\leq \|\zeta^0\|\|\zeta^{n^*}\|+4 C_{13}\sum_{l=1}^{n^*}\sum_{j=1}^l\theta_{l-j}^{(l)}\|\zeta^{n^*}\|\|\zeta^{l}\|\\
&+2\sum_{l=1}^{n^*}\sum_{j=1}^l\theta_{l-j}^{(l)}(\|R_f^j\|+\|R_t^j\|)\|\zeta^{n^*}\|
+2C_\Omega \mathcal{M}(1+2C_{13})h^{r+1}\sum_{l=1}^{n^*}\tau_l  \|\zeta^{n^*}\|.
\end{align*}
Eliminating a $\|\zeta^{n^*}\|$ from both sides and using the facts that $\|\zeta^{n}\|\leq \|\zeta^{n^*}\|$ and $ n^*\leq n$, we arrive at
\begin{align}
\|\zeta^{n}\|\leq \|\zeta^{n^*}\|
&\leq \|\zeta^0\|+4 C_{13}\sum_{l=1}^{n}\tau_l\|\zeta^{l}\|+2\sum_{l=1}^{n}\sum_{j=1}^l\theta_{l-j}^{(l)}(\|R_f^j\|+\|R_t^j\|)
+2C_\Omega \mathcal{M}(1+2C_{13})t_n h^{r+1}\nonumber\\
&\leq 4 C_{13}\sum_{l=1}^{n}\tau_l\|\zeta^{l}\|+2\sum_{l=1}^{n}\sum_{j=1}^l\theta_{l-j}^{(l)}(\|R_f^j\|+\|R_t^j\|)
+C_\Omega \mathcal{M}(2T+4C_{13}T+1) h^{r+1},\label{EQ_1545}
\end{align}
where   $\|\zeta^0\|=\|R_h u_0-U_h^0\|=\|R_h u_0-u_0\|\leq C_\Omega h^{r+1}\| u_0\|_{H^{r+1}}$ is used.
By exchanging the order of summation and using  identity \eqref{EQ_ptheta}, Lemmas \ref{lemma05}, \ref{lemma202147} and Proposition \ref{Proposition_Pestimate}, it holds
\begin{align}
2\sum_{l=1}^{n}&\sum_{j=1}^l\theta_{l-j}^{(l)}(\|R_f^j\|+\|R_t^j\|)=2\sum_{j=1}^np^{(n)}_{n-j}(\|R_f^j\|+\|R_t^j\|) \nonumber\\
&=2\sum_{j=1}^np^{(n)}_{n-j}\|R_f^j\|+2\sum_{j=2}^np^{(n)}_{n-j}\|R_t^j\|+p^{(n)}_{n-1}\|R_t^1\| \nonumber\\
&\leq (2C_ft_n+3t_n\mathcal{M}+\mathcal{M})\tau^2. \label{EQ_1615}
\end{align}
Thus, inserting \eqref{EQ_1615} into \eqref{EQ_1545}, for $\tau \leq \tau^*\leq 1/(8C_{13})$, we have
\begin{align*}
\|\zeta^{n}\| \leq 8 C_{13}\sum_{l=1}^{n-1}\tau_l\|\zeta^{l}\|+2(2C_fT+3T\mathcal{M}+\mathcal{M})\tau^2+2C_\Omega \mathcal{M}(2T+4C_{13}T+1) h^{r+1},
\end{align*}
which together with Lemma \ref{lemma04} implies that
\begin{align}
\|\zeta^{n}\|&\leq 2\exp(8 C_{13}T) \big((2C_fT+3T\mathcal{M}+\mathcal{M})\tau^2+C_\Omega \mathcal{M}(2T+4C_{13}T+1) h^{r+1}\big)\nonumber\\
&\leq C_{14}(h^{r+1}+\tau^2),\label{EQ_03252036}
\end{align}
where
$C_{14}=2\exp(8 C_{13}T)\max\{2C_fT+3T\mathcal{M}+\mathcal{M},C_\Omega \mathcal{M}(2T+4C_{13}T+1)\}.$
The proof is completed by inserting \eqref{EQ_02031547} and \eqref{EQ_03252036} into \eqref{EQ_2103252033} and setting $C_0^*:=C_\Omega \mathcal{M}+C_{14}$.
\end{proof}

\section{Numerical Examples}\label{Section_5}
We now present two examples to investigate the quantitative accuracy of fully discrete scheme \eqref{008} from two perspectives: (a) the convergence order of numerical scheme \eqref{008} in time and space; (b) the unconditional convergence by fixing the spacial size $h$ and refining the temporal size $\tau$.  To obtain the variable time steps, we construct the time steps by $\tau_k = T\lambda_k/\Lambda$ for $1\leq k\leq N$, where $\Lambda = \sum_{k=1}^N \lambda_k$ and $\lambda_k$ is randomly drawn from the uniform distribution on $(0,1)$. In the simulations, we only use the linear finite element (i.e. $r=1$), and consider the time steps in two cases:
\begin{itemize}
\item[]Case (i): the ratios of adjacent time steps satisfy \Ass{1}, i.e., $0<r_k<4.8645$;
\item[]Case (ii): the ratios do not satisfy \Ass{1}, i.e., can be taken large randomly.
\end{itemize}


\noindent {\bf Example 5.1.}  We here consider the computation of the following 2D nonlinear parabolic equation
$$\partial_t u=\Delta u + \sqrt{1+u^2} + g(\bm{x},t) .$$
In the simulations, we take the final time $T=1$ and the computational domain $\Omega = (0,1)^2$. As a benchmark solution, we take an exact solution in the form of $u(\bm{x},t)=(1+t^3)x_1(1-x_1)^2x_2(1-x_2)^2$, and the source term $g(\bm{x},t)$ can be calculated accordingly.

Table \ref{table3} shows the spatial $L^2$-error by increasing $M$ and fixing $N=10^4$. Tables \ref{table1} and \ref{table2} show the temporal $L^2$-errors by taking $M=N$ and increasing $N$ under Cases (i) and (ii) in above, respectively.  From Tables \ref{table3}-\ref{table2}, the second order convergence rates can be observed, which agrees with the results in Theorem \ref{theorem03}.
In addition,  The left panel in Figure \ref{fig.1} plots the $L^2$-error by fixing $N$ and increasing $M$, which implies that the error estimate is unconditionally stable since the solution does not blowup for any temporal and spatial ratios.



\begin{table}[ht]
\begin{center}
\caption {Errors and spatial convergence orders with $N=10^4$ for Example 5.1} \vspace*{0.5pt}\label{table3}
\def\temptablewidth{1.0\textwidth}
{\rule{\temptablewidth}{1pt}}
\begin{tabular*}{\temptablewidth}{@{\extracolsep{\fill}}cccc}
 $M$ &\multicolumn{1}{c}{error}&\multicolumn{1}{c}{order} &\multicolumn{1}{c}{$r_{\max}$} \\
\hline
40    &4.2893e-05   &--       &4.6836       \\
80    &8.9895e-06     &2.2544      &4.8091      \\
160    &1.9913e-06   &2.1745       &4.7541   \\
320    &4.6221e-07    &2.1071       &4.7043       \\
\end{tabular*}
{\rule{\temptablewidth}{1pt}}
\end{center}
\end{table}

\begin{table}[ht]
\begin{center}
\caption {Errors and time convergence orders with $0<r_k<4.8645$ for Example 5.1 } \vspace*{0.5pt}\label{table1}
\def\temptablewidth{1.0\textwidth}
{\rule{\temptablewidth}{1pt}}
\begin{tabular*}{\temptablewidth}{@{\extracolsep{\fill}}cccc}
 $N$ &\multicolumn{1}{c}{error}&\multicolumn{1}{c}{order} &\multicolumn{1}{c}{$r_{\max}$} \\
\hline
120    &3.6800e-06      &--      &4.2785        \\
240    &8.3695e-07    &2.1365      &4.6672       \\
480    &1.9795e-07    &2.0800      &4.1304     \\
960    &4.8114e-08    &2.0406      &4.2979       \\
\end{tabular*}
{\rule{\temptablewidth}{1pt}}
\end{center}
\end{table}

\begin{figure}[htbp]
\begin{center}
\includegraphics[width=2.5in]{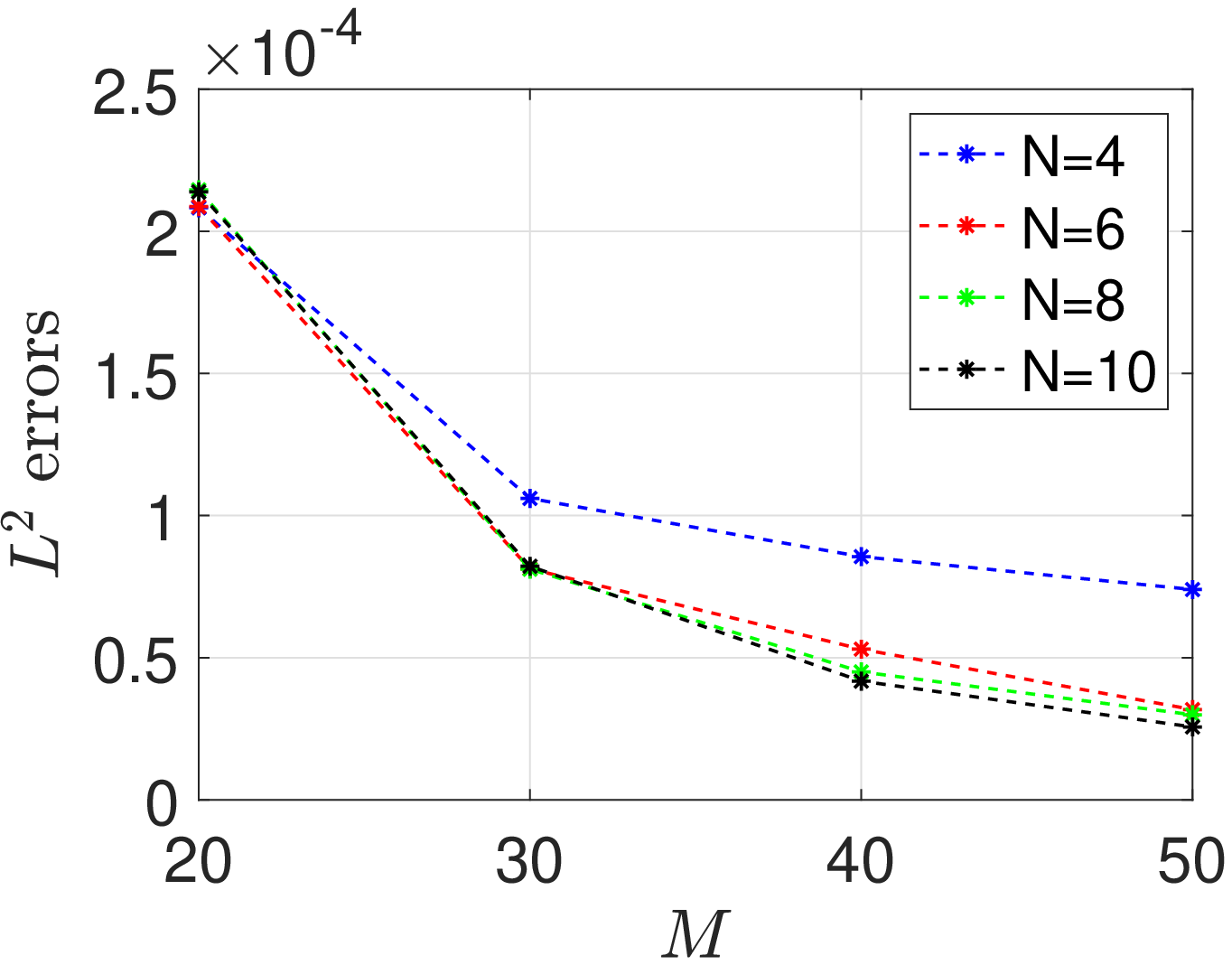}
\includegraphics[width=2.5in]{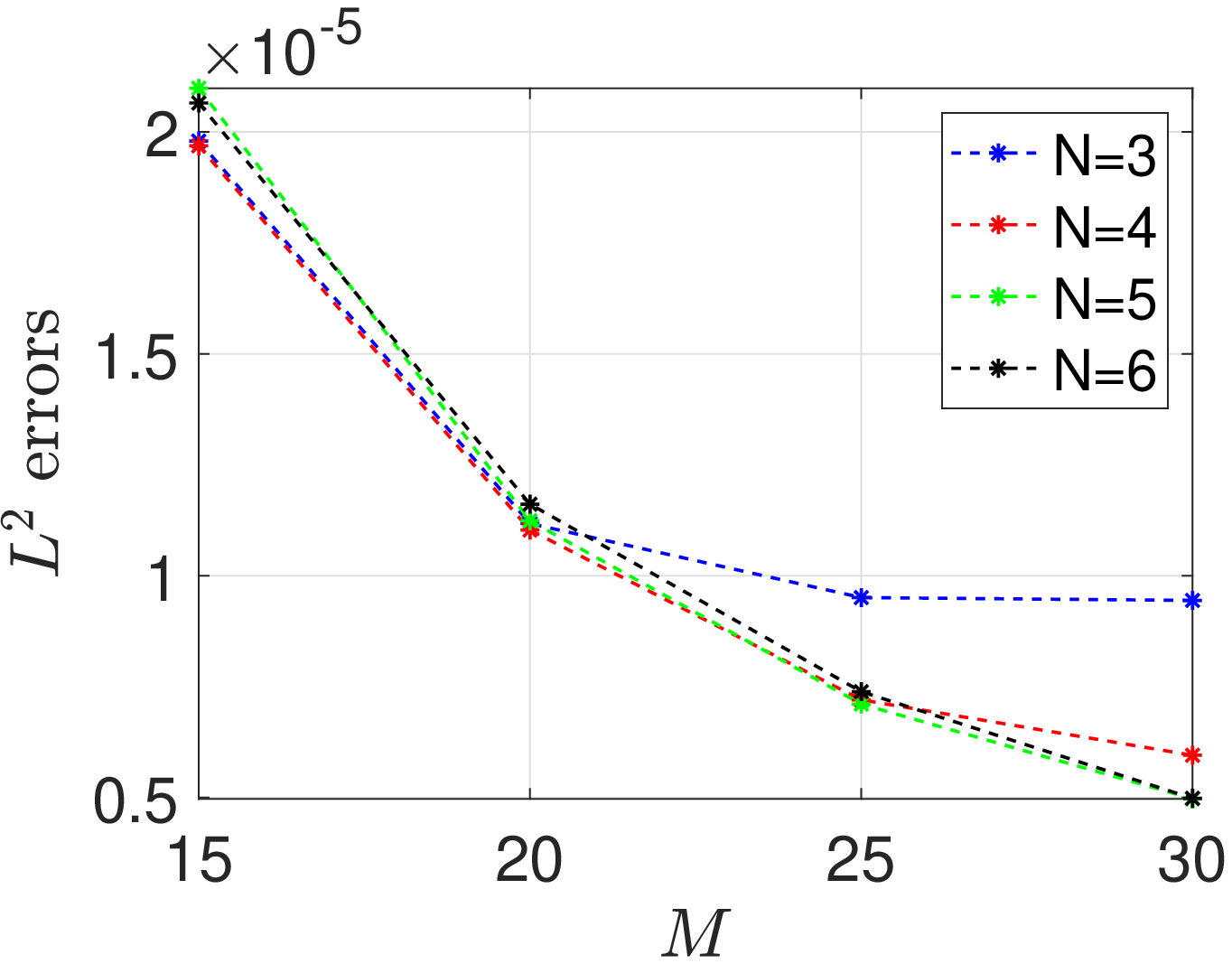}
\caption{To investigate the scheme is unconditionally stable, $L^2$-errors are plotted by increasing $M$ for different given $N$: left and right panels for Examples 5.1 and 5.2, respectively.  }\label{fig.1}
\end{center}
\end{figure}
%

\begin{table}[ht]
\begin{center}
\caption {Errors and time convergence orders with $r_k>0$ for Example 5.1 } \vspace*{0.5pt}\label{table2}
\def\temptablewidth{1.0\textwidth}
{\rule{\temptablewidth}{1pt}}
\begin{tabular*}{\temptablewidth}{@{\extracolsep{\fill}}cccc}
 $N$ &\multicolumn{1}{c}{error}&\multicolumn{1}{c}{order} &\multicolumn{1}{c}{$r_{\max}$} \\
\hline
120    &3.6533e-06    &--       &21.6746        \\
240    &8.3479e-07    &2.1297       &24.1472       \\
480    &1.9783e-07    &2.0772       &285.448    \\
960    &4.7942e-08    &2.0449       &792.551       \\
\end{tabular*}
{\rule{\temptablewidth}{1pt}}
\end{center}
\end{table}

\bigskip
\noindent {\bf Example 5.2.}  We now consider a 3D nonlinear parabolic equation
$$\partial_t u=\Delta u + u-u^3 + g(\bm{x},t).$$
In the simulations, we take the final time $T=1$ and the computational domain $\Omega = (0,1)^3$. Again, as a benchmark solution, we take an exact solution in the form of $u(\bm{x},t)=(1+t^3)x_1(1-x_1)^2x_2(1-x_2)^2x_3(1-x_3)^2$, and the source term $g(\bm{x},t)$ can be calculated accordingly.

Table \ref{table5} shows the spatial $L^2$-error by increasing $M$ and fixing $N=10^3$. Tables \ref{table4} and \ref{table6} show the temporal $L^2$-errors by taking $M=N$ and increasing $N$ under Cases (i) and (ii) in above, respectively.  From Tables \ref{table5}-\ref{table6}, the second order convergence rates can be observed, which again agrees with the results in Theorem \ref{theorem03}.
The right panel in Figure \ref{fig.1} plots the $L^2$-error by fixing $N$ and increasing $M$, which again implies that the error estimate is unconditionally stable since the solution does not blowup for any temporal and spatial ratios.

%

\begin{table}[ht]
\begin{center}
\caption {Errors and  spatial convergence orders with  $N=10^3$ for Example 5.2 } \vspace*{0.5pt}\label{table5}
\def\temptablewidth{1.0\textwidth}
{\rule{\temptablewidth}{1pt}}
\begin{tabular*}{\temptablewidth}{@{\extracolsep{\fill}}cccc}
 $M$ &\multicolumn{1}{c}{error}&\multicolumn{1}{c}{order} &\multicolumn{1}{c}{$r_{\max}$} \\
\hline
6      &1.3595e-04    &--             &4.5317         \\
12    &3.3786e-05    &2.0086     &4.3782      \\
24    &8.4315e-06    &2.0025     &4.3911      \\
48    &2.1069e-06    &2.0007     &4.6375     \\
\end{tabular*}
{\rule{\temptablewidth}{1pt}}
\end{center}
\end{table}

\begin{table}[ht]
\begin{center}
\caption {Errors and  time convergence orders with  $0<r_k<4.8645$ for Example 5.2 } \vspace*{0.5pt}\label{table4}
\def\temptablewidth{1.0\textwidth}
{\rule{\temptablewidth}{1pt}}
\begin{tabular*}{\temptablewidth}{@{\extracolsep{\fill}}cccc}
 $N$ &\multicolumn{1}{c}{error}&\multicolumn{1}{c}{order} &\multicolumn{1}{c}{$r_{\max}$} \\
\hline
4     &3.0607e-04     &--              &2.3893         \\
8    &7.5914e-05       &2.0114      &2.2410       \\
16    &  1.8918e-05    &2.0046      & 2.9583       \\
32    &4.7170e-06    &2.0038      &3.9028     \\
\end{tabular*}
{\rule{\temptablewidth}{1pt}}
\end{center}
\end{table}

\begin{table}[ht]
\begin{center}
\caption {Errors and  time convergence orders with  $r_k>0$ for Example 5.2 } \vspace*{0.5pt}\label{table6}
\def\temptablewidth{1.0\textwidth}
{\rule{\temptablewidth}{1pt}}
\begin{tabular*}{\temptablewidth}{@{\extracolsep{\fill}}cccc}
 $N$ &\multicolumn{1}{c}{error}&\multicolumn{1}{c}{order} &\multicolumn{1}{c}{$r_{\max}$} \\
\hline
4     &3.0721e-04     &--              &2.4017         \\
8    &7.5918e-05       &2.0167      &13.2425       \\
16    &  1.8929e-05    &2.0039      &32.1533       \\
32    &4.7200e-06    &2.0037      &25.1107     \\
\end{tabular*}
{\rule{\temptablewidth}{1pt}}
\end{center}
\end{table}


\section{Conclusions}\label{Section_6}
We have presented the unconditionally optimal error estimate of a linearized variable-time-step BDF2  scheme for nonlinear parabolic equations in conjunction with a Galerkin finite element approximation in space. The rigorous error estimate of $\mathcal{O}(\tau^2+h^{r+1})$ in the $L^2$-norm has been established under mild assumptions on the ratio of adjacent time steps \Ass{1} and the maximum time-step size \Ass{2}.
The analysis is based on the recently developed DOC and DCC kernels and the time-space error splitting approach.  The techniques of DOC and DCC kernels facilitate the proof of the second order convergence of BDF2 with a new ratio
 of adjacent time steps, i.e., $0<r_k <  4.8645$.
The error splitting approach divides the error estimate of numerical solution into the estimates of $\|U^n\|_{H^2}$ and $\|R_hU^n-U_h^n\|_{L^2}$, which circumvents the ratio restriction of time-space sizes, i.e., this is the so-called unconditionally optimal error estimate.

In addition, our error estimate is robust. The robustness here means the error estimate does not require any extra restriction on time steps except the conditions \Ass{1} and \Ass{2}. Meanwhile, we used the first-order  BDF1 to calculate the first level solution $u^1$, and found that this first-order scheme did not bring the loss of global accuracy of second order. Although a great progress has been made for the second order convergence of variable time-step BDF2 scheme solving nonlinear problems, but most of them use the implicit approximation to the nonlinear terms.
As far as we know, it is a pioneer work to present the optimal error estimate for the variable time-step BDF2 scheme with a linearized approximation to nonlinear terms only under the ratio condition $0<r_k< 4.8645$ and a mild assumption on maximum time step \Ass{2}. Numerical examples were provided to verify our theoretical analysis.  
\section*{Acknowledgements}
The research is supported by the National Natural Science Foundation of China (Nos. 11771035 and 12171376), 2020- JCJQ-ZD-029, NSAF U1930402. The numerical calculations in this paper have been done on the supercomputing system in the Supercomputing Center of Wuhan University.
JZ would like to thank professor Tao Tang and professor Zhimin Zhang for their valuable discussions on this topic.

\section{Appendix}\label{appendix}
\subsection{The proof of Lemma \ref{lemma05}} \label{App61}
\begin{proof}
It follows from the Taylor expansion that
\begin{align*}\label{014}
\|R_f^j\|&=\|(u^j-u^{j-1})^2\int_0^1f''(u^{j-1}+s\nabla_\tau u^j)(1-s)\zd s\|\\
&\leq\|u^j-u^{j-1}\|\cdot\|(u^j-u^{j-1})\int_0^1f''(u^{j-1}+s\nabla_\tau u^j)(1-s)\zd s\|_{L^\infty}\\
&\leq\|\int_{t_{j-1}}^{t_j}\partial_tu\zd t\|\cdot\|u^j-u^{j-1}\|_{L^\infty}\|\int_0^1f''(u^{j-1}+s\nabla_\tau u^j)(1-s)\zd s\|_{L^\infty}\\
&\leq\mathcal{M}\tau_j\cdot\big\|\int_{t_{j-1}}^{t_j}\partial_tu\zd t\big\|_{L^\infty}\frac{\sup_{|u|\leq C_\Omega \mathcal{M}}|f''(u)|}2\\
&\leq\frac12\mathcal{M}\tau_j\sup_{|u|\leq C_\Omega \mathcal{M}}|f''(u)|\int_{t_{j-1}}^{t_j}\|\partial_tu\|_{L^\infty}\zd t \; \leq\frac12\sup_{|u|\leq C_\Omega \mathcal{M}}|f''(u)|{C_\Omega }\mathcal{M}^2\tau_j^2,
\end{align*}
where $\|\partial_tu\|_{L^\infty}\leq C_\Omega \|\partial_tu\|_{H^2}\leq C_\Omega \mathcal{M}$ is used.
The proof is completed.
\end{proof}
\subsection{The proof Lemma \ref{lemma202147}} \label{App62}
\begin{proof}
Recall that the first step value $u^1$ is computed by BDF1, i.e., $R_t^1=\frac{u_1-u_0}{\tau_1}-\partial_t u(t_1)$, one has
\begin{align*}
\| R_t^1\| &= \|\frac{ u_1- u_0}{\tau_1}- \partial_t u(t_1)\| = \frac{1}{\tau_1}\|\int_0^{t_1} \partial_t u(s)-\partial_t u(t_1) \zd s\|\nonumber\\
&\leq \frac{1}{\tau_1}\int_0^{t_1}\| \partial_t u(s)- \partial_t u(t_1)\| \zd s \leq \frac{1}{\tau_1}\int_0^{t_1}\int_s^{t_1}\| \partial_{tt} u(t)\|\zd t \zd s \leq \frac{\mathcal{M}}{2}\tau_1.
\end{align*}
For $j\geq 2$,  by using the Taylor's expansion formula , one produces (also see \cite[Theorem 10.5]{T06})
\begin{align*}
R_t^j = -\frac{1+r_j}{2\tau_j}\int_{t_{j-1}}^{t_j}(t-t_{j-1})^2\partial_{ttt} u\d t+\frac{r_j}{2(1+r_j)\tau_{j-1}}\int_{t_{j-2}}^{t_j}(t-t_{j-2})^2\partial_{ttt} u\d t,\quad 2\leq j\leq N.
\end{align*}
Combining with the regularity assumption \eqref{EQ_uregularity } and the ratio condition \Ass{1}, one derive
\begin{align}
\|R_t^j\| \leq \frac{\mathcal{M}}{6}(1+r_j)\tau_j(2\tau_j+\tau_{j-1})\leq \frac{3}{2}\mathcal{M}\tau_j\tau.
\end{align}
The proof is completed.
\end{proof}

\bibliographystyle{plain}
  \bibliography{Nonlinear_ref}
\end{document}